\numberwithin{equation}{section}
\newtheorem{Theorem}{Theorem}[section]
\newtheorem{Corollary}[Theorem]{Corollary}
\newtheorem{Lemma}[Theorem]{Lemma}
\newtheorem{Proposition}[Theorem]{Proposition}
 { \theoremstyle{definition}
\newtheorem{Definition}[Theorem]{Definition}
\newtheorem{Example}[Theorem]{Example}
\newtheorem{Remark}[Theorem]{Remark} }
\begin{document}

\allowdisplaybreaks

\newcommand{\arXivNumber}{1503.09023}

\renewcommand{\thefootnote}{$\star$}

\renewcommand{\PaperNumber}{092}

\FirstPageHeading

\ShortArticleName{Dif\/ferential Galois Theory and Lie Symmetries}

\ArticleName{Dif\/ferential Galois Theory and Lie Symmetries\footnote{This paper is a~contribution to the Special Issue on
Algebraic Methods in Dynamical Systems.
The full collection is available at
\href{http://www.emis.de/journals/SIGMA/AMDS2014.html}{http://www.emis.de/journals/SIGMA/AMDS2014.html}}}

\Author{David {BL\'AZQUEZ-SANZ}~$^\dag$, Juan J.~{MORALES-RUIZ}~$^\ddag$ and Jacques-Arthur {WEIL}~$^\S$}

\AuthorNameForHeading{B.~Bl\'azquez-Sanz, J.J.~Morales-Ruiz and J.-A.~Weil}

\Address{$^\dag$~Universidad Nacional de Colombia, Colombia}
\EmailD{\href{mailto:dblazquezs@unal.edu.co}{dblazquezs@unal.edu.co}}
\URLaddressD{\url{https://sites.google.com/a/unal.edu.co/dblazquezs/}}

\Address{$^\ddag$~Universidad Polit\'ecnica de Madrid, Spain}
\EmailD{\href{mailto:juan.morales-ruiz@upm.es}{juan.morales-ruiz@upm.es}}

\Address{$^\S$~Universit\'e de Limoges, France}
\EmailD{\href{mailto:jacques-arthur.weil@unilim.fr}{jacques-arthur.weil@unilim.fr}}

\ArticleDates{Received March 31, 2015, in f\/inal form November 11, 2015; Published online November 20, 2015}

\Abstract{We study the interplay between the dif\/ferential Galois group and
the Lie algebra of inf\/initesimal symmetries of systems of linear dif\/ferential
equations. We show that some symmetries can be seen as solutions of a hierarchy of linear
dif\/ferential systems. We show that the existence of  rational symmetries constrains the dif\/ferential Galois group in the system in a way that depends of the Maclaurin series of the symmetry along the zero solution.}

\Keywords{linear dif\/ferential system; Picard--Vessiot theory; dif\/ferential Galois theory; inf\/initesimal symmetries}

\Classification{12H05; 34M15; 34A26}

\renewcommand{\thefootnote}{\arabic{footnote}}
\setcounter{footnote}{0}

\section{Introduction}

Dif\/ferential Galois theory and Lie symmetries are two dif\/ferent theoretical
frameworks designed to deal with similar mathematical problems: the integration,
reduction, classif\/ication and listing of solutions of dif\/ferential equations.
Both theories appeared simultaneously at the end
of 19th century. However, the links between them remained hidden for a long time, mostly because
of the apparent walls that separate mathematical disciplines.
Dif\/ferential Galois theory appears to be central to dif\/ferential algebra. On the other hand,
the theory of Lie symmetries belongs to the realm
of local dif\/ferential geometry. For a general exposition of both theories
we refer the readers to \cite{CrespoHajto2011,PutSinger2003} and~\cite{KLR,Olver} respectively.
Throughout this paper, ``dif\/ferential Galois theory'' refers to the Galois theory of systems of linear dif\/ferential equations,
also called the Picard--Vessiot theory.

The works of Ziglin, Morales, Ramis, Sim\'o, Churchill and others show that
dif\/ferential Galois groups may measure obstructions to the existence of f\/irst integrals of
Hamiltonian systems; regarding this approach to non-integrability of dynamical systems, see the  recent survey~\cite{Morales2015} and references therein.
This suggests an interplay between Lie symmetries and Galois group.
Indeed, for Hamiltonian systems, the (musical) duality
induced by the symplectic structure transforms a f\/irst integral into a Lie symmetry; so,  obstructions to the existence of f\/irst integrals
induce  obstructions to the existence of  Hamiltonian vector f\/ields of Lie symmetries.

In~\cite{AZ}, Ayoul and Zung study a more general def\/inition of integrability (Bogoyavlensky integrability~\cite{Bog1996}), which includes directly Lie symmetries and they show again how the Galois groups of variational equations appear to give obstructions to the existence of such symmetries.

B. Malgrange has also suggested a link between Lie symmetries and his non-linear dif\/ferential Galois theory
(see, e.g.,~\cite[Remark~iii, p.~224]{Malgrange2002}).
The general idea is that  dif\/ferential Galois groups and Lie symmetries should be related in the sense that
more symmetries should imply a~smaller Galois group.
Our results elaborate on this idea to turn it into precise
statements which apply to  systems of linear dif\/ferential equations.

In order to link Lie symmetries with dif\/ferential Galois theory,  we follow a
geometrical approach  developed by the f\/irst two authors in~\cite{BM2010}.
Some general results about symmetries were already stated
in \cite[Section~6]{BM2012},
but in a more general context of automorphic systems. In particular, it is implicit in~\cite{BM2012}
that the {\it eigenring}~\cite{Barkatou2007,Singer1996} consists of vertical Lie symmetries.
Our approach  here is less abstract and more explicit (see Section~\ref{section6.2}). In
connection with that,  we point out that the relevance of the eigenring for the symmetries of linear dif\/ferential equations
 was discovered independently by C.~Jensen  in the nice paper~\cite{Jensen2005}, but without any mention of a relationship
 with Picard--Vessiot theory.

 The results contained in this work may be summarized as follows:
\begin{itemize}\itemsep=0pt
\item[(a)] The search for symmetries of systems of linear dif\/ferential
equations may be reduced to the search for a particular kind of symmetries, namely homogeneous
polynomial vertical symmetries (Lemma~\ref{Maclaurin}, Proposition~\ref{Maclaurin2}).
\item[(b)] We focus on polynomial vertical inf\/initesimal symmetries and show that these are
solutions of associated linear dif\/ferential equations that can be deduced from the given equation (via the Tannakian theory). The dif\/ferential Galois theories of those
equations is related to that of our original system (Theorem~\ref{t:sym_ci}).
\item[(c)] The Galois group determines the Lie algebra of polynomial vertical symmetries with coef\/f\/icients in the ground f\/ield (Theorem~\ref{t:Lie_Gal}).
\item[(d)] Each non-trivial vertical polynomial symmetry with coef\/f\/icients in the ground f\/ield places constrains on the Galois group. Thus, the bigger the symmetry algebra, the smaller the Galois group. In several cases, a single symmetry may force the group to be abelian or solvable (Theorems~\ref{t:decomposer} and~\ref{t:solver}, Corollaries~\ref{t:sym_lv} and~\ref{c:complete}).
\end{itemize}

Section~\ref{section2} contains the basic def\/initions of symmetries. Section~\ref{section3} studies polynomial sym\-met\-ries and establishes part~(a) above.
Section~\ref{section4} gives a dictionary between graded polynomial vector f\/ields, linear actions and corresponding linear dif\/ferential systems.
A geometric def\/inition of the dif\/ferential Galois group is given in Section~\ref{section5} and the comparisons with symmetries are derived in Section~\ref{section6}.

The problem of linking dif\/ferential Galois theory with Lie symmetries of dif\/ferential operators has
been studied by C.~Athorne in~\cite{Athorne1997} and by W.R.~Oudshoorn and M.~van der Put
in~\cite{Vanderput2002}.
This restriction to dif\/ferential operators seems to complicate the relations between the Galois group and the Lie symmetries.
So, in this work, we adopt a slightly dif\/ferent point of view. Instead of
considering higher-order linear dif\/ferential equations, we study
systems of f\/irst-order linear dif\/ferential equations. This leads us to a broader notion of inf\/initesimal
symmetry which has an explicit relation with the dif\/ferential Galois group.
In Appendix~\ref{Appendix_A}, we propose a  comparison between the def\/initions
of inf\/initesimal symmetries in the contexts of higher-order equations and f\/irst-order systems.

\section{Characteristic and vertical symmetries}\label{section2}

Let ${\mathcal U}$ be an open subset of the complex projective line
$\mathbb{CP}_1$. By a \emph{function field} $\mathcal K$
we mean a~subf\/ield of the f\/ield of meromorphic functions
on~${\mathcal U}$ such that $\mathcal K$ contains the constants~$\mathbb C$ and is
closed with respect the derivation~$\frac{d}{dx}$. Clearly,
f\/ields of rational functions, elliptic functions, etc.\ are
function f\/ields.
As shown by Seidenberg in \cite{Seidenberg1958,Seidenberg1969}, any dif\/ferential f\/ield which is f\/initely generated over
$\mathbb{Q}$ can be embedded into such a function f\/ield.
We f\/ix from now on the function
f\/ield $\mathcal K$ and its domain~${\mathcal U}$.

We consider a system of linear dif\/ferential equations with
coef\/f\/icients in the function f\/ield~$\mathcal K$
\begin{gather}\label{LE}
\frac{dy}{dx} = A(x)y, \qquad y = (y_1,\ldots,y_n), \qquad A(x)\in \mathfrak{gl}(n,\mathcal K).
\end{gather}
The poles of $A(x)$ are called the \emph{singularities}. The point at inf\/inity may be
considered as a~singularity depending on its behavior after a suitable change
of coordinates. The graphs of the solutions of~\eqref{LE} are the integral curves of the associated
vector f\/ield:
\begin{gather*}
X = \frac{\partial}{\partial x} + \sum_{i,j=1}^n a_{ij}(x)y_j \frac{\partial}{\partial {y_i}},
\end{gather*}
which is a meromorphic vector f\/ield in ${\mathcal U}\times\mathbb C^n$.

An \emph{infinitesimal symmetry}
of $X$ is an analytic vector f\/ield $Y$ def\/ined in some
open subset of ${\mathcal U}\times\mathbb C^n$ such that
$[Y,X]$ and~$X$ are linearly dependent in their common
domain of def\/inition.
In particular, vector f\/ields of the form $\gamma X$ where
$\gamma$ is an analytic function def\/ined in some open subset
of ${\mathcal U}\times \mathbb C^n$ are inf\/initesimal symmetries of $X$.
They are called \emph{characteristic symmetries} of~$X$. Since the def\/inition of inf\/initesimal symmetries
is local, we have \emph{sheaves} of inf\/initesimal symmetries
and characteristic symmetries of~$X$ in ${\mathcal U}\times \mathbb C^n$.

The Lie bracket of two inf\/initesimal symmetries is also an inf\/initesimal
symmetry. Hence, inf\/initesimal symmetries form a Lie algebra sheaf.
Characteristic symmetries form an ideal of the Lie
algebra sheaf of inf\/initesimal symmetries.
An inf\/initesimal symmetry $Y$ is a
\emph{vertical symmetry} if it is tangent to the f\/ibers of the
canonical projection ${\mathcal U}\times \mathbb C^n\to {\mathcal U}$, that is
$Yx = 0$. Its expression
in coordinates takes the form
\begin{gather}\label{VS}
Y = \sum_{i=1}^n f_i(x,y)\frac{\partial}{\partial y_i}.
\end{gather}
If $Y$ is a vertical symmetry, then the Lie bracket $[X,Y]$ vanishes.

The Lie algebra sheaf of \emph{vertical symmetries}
is canonically isomorphic to the quotient Lie algebra sheaf
of all inf\/initesimal symmetries modulo the ideal
of characteristic symmetries.
If~$Y$ is an inf\/initesimal symmetry, we can take its
vertical representative: $\tilde Y = Y  - (Yx)X.$

By this reduction, the algebra of vertical symmetries is isomorphic to the algebra of
inf\/ini\-te\-si\-mal symmetries modulo the ideal of characteristic symmetries.
Thus, in order to study the sym\-met\-ries
of~\eqref{LE}, it suf\/f\/ices to consider vertical symmet\-ries.
 We consider symmetries that are def\/ined in open
subsets of the form $\mathcal V \times \mathbb C^n$
with $\mathcal V \subseteq \mathcal U$. Such symmetries can be seen as sections
of a sheaf def\/ined in~$\mathcal U$.

\begin{Definition}
The sheaf ${\bf sym}_X$ in ${\mathcal U}$ assigns to
each open subset ${\mathcal V}\subseteq {\mathcal U}$ the Lie algebra of vertical
inf\/initesimal symmetries of X def\/ined in
${\mathcal V}\times\mathbb C^n$
\begin{gather*}
{\bf sym}_X({\mathcal V}) = \big\{Y\in{\mathfrak X}_{\textrm{an}}({\mathcal V}\times\mathbb C^n) \, |\,
[Y,X]=0 \mbox{ and }  Yx = 0 \big\},
\end{gather*}
where ${\mathfrak X}_{\textrm{an}}({\mathcal V}\times\mathbb C^n)$ stands for the Lie algebra of
analytic vector f\/ields in ${\mathcal V}\times\mathbb C^n$.
\end{Definition}

Our objective is to describe the sections of this sheaf ${\bf sym}_X$
and its relation with the closed form solutions and the Picard--Vessiot
theory of the system~\eqref{LE}. From now on, when we mention a
\emph{symmetry of~$X$} we mean a section of ${\bf sym}_X$, that is, a
vertical inf\/initesimal symmetry.

{\sloppy Our def\/inition of inf\/initesimal symmetries is considered, for instance, in the reference book~\cite{KLR}, where the {\it vertical symmetries }
are called  {\it shuffling symmetries}. We prefer our terminology, because it has  a clearer geometrical meaning in our context of linear dif\/ferential equations (i.e., f\/iber bundles, although we will not use this geometrical terminology explicitly here). Actually, the relation $[X,Y]=0$  sometimes appears in the literature as being the direct def\/inition of an inf\/initesimal symmetry of~$X$, see
for instance~\cite{Bog1996,Bog2005}.

}

\section{Polynomial vertical  symmetries}\label{section3}

\subsection{Polynomial vertical  vector f\/ields}\label{section3.1}

\begin{Definition}
Let $Y$ be a vertical vector f\/ield def\/ined in ${\mathcal V}\times \mathbb C^{n}$ with
${\mathcal V}$ an open subset of ${\mathcal U}$
\begin{gather*}
Y = \sum_{i=1}^n f_i(x,y)\frac{\partial}{\partial y_i}.
\end{gather*}
We say that $Y$ is a \emph{polynomial vertical} vector f\/ield
when the components $f_i(x,y)$ are polynomials in the variables $y_1,\ldots, y_n$.
\end{Definition}

\begin{Example}\label{ex:1}
The (Euler) homogeneous vector f\/ield
\begin{gather*}
\vec h = \sum_{i=1}^n y_i\frac{\partial}{\partial y_i}
\end{gather*}
is polynomial vertical, indeed linear vertical.
It is a symmetry of any system of linear dif\/ferential equations.
Hence, it is a global section of ${\bf sym}_X$.
\end{Example}

The def\/initions of \emph{degree} and \emph{homogeneous components} of a
polynomial vertical  vector f\/ield are clear. Given a function f\/ield
$\mathcal K$ of meromorphic functions on ${\mathcal V}\subseteq {\mathcal U}$, we can also
speak of the \emph{polynomial vertical  vector fields with coefficients
in~$\mathcal K$}. They are the polynomial vertical  vector f\/ields
\begin{gather*}
Y = \sum_{i=1}^n P_i(x,y)\frac{\partial}{\partial y_i},
\end{gather*}
where $P_1(x,y),\ldots,P_n(x,y)$ are in $\mathcal K[y_1,\ldots,y_n]$.

\subsection{Homogeneous components of symmetries}\label{section3.2}

Let us consider a vertical inf\/initesimal symmetry $Y\in {\bf sym}_X({\mathcal V})$ with
${\mathcal V}\subseteq {\mathcal U}$.
We can develop the Maclaurin series for the components of $Y$ with respect the
variables $y_1,\ldots,y_n$, obtaining
\begin{gather*}
Y = \sum_{i=1}^n \sum_{\alpha\in \mathbb Z_+^n} g_{i,\alpha}(x)y^\alpha \frac{\partial}{\partial y_i},
\end{gather*}
where the functions $g_{i,\alpha}(x)$ are analytic functions on ${\mathcal V}$.
We decompose~$Y$ as a sum of its homogeneous components
\begin{gather*}
Y = Y_0 + Y_1 + Y_2 + \cdots,
\end{gather*}
where each $Y_j$ is a homogeneous polynomial vertical  vector f\/ield of degree~$j$ (i.e., with respect to the~$y$ variables)
in ${\mathcal V}\times \mathbb C^n$.

\begin{Lemma}\label{Maclaurin}
Let $Y\in {\bf sym}_X({\mathcal V})$ with ${\mathcal V}\subseteq {\mathcal U}$ be a~symmetry of~$X$.
All the  homogeneous com\-po\-nents $Y_j$ of its Maclaurin series are symmetries of~$X$.
\end{Lemma}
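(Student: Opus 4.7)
The plan is to exploit the fact that a vertical symmetry satisfies the cleaner relation $[X,Y]=0$ (as stated in the excerpt just after equation~\eqref{VS}), and then show that the Lie bracket with $X$ preserves the homogeneous degree in the $y$-variables. Once this grading is respected, the identity $[X,Y]=0$ splits into one equation per homogeneous component.

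First I would write $Y=\sum_i f_i(x,y)\partial_{y_i}$ with $f_i$ analytic in $y$, expand in Maclaurin series about $y=0$, and unpack the bracket in coordinates. A direct calculation gives
\begin{gather*}
[X,Y]=\sum_{i=1}^n\left(\frac{\partial f_i}{\partial x}+\sum_{k,l}a_{kl}(x)\,y_l\frac{\partial f_i}{\partial y_k}-\sum_{j}a_{ij}(x)f_j\right)\frac{\partial}{\partial y_i},
\end{gather*}
which is again vertical (a sanity check already mentioned in the text). The crucial observation is that each of the three terms inside the parenthesis preserves the total degree in $y$: the derivative $\partial_x$ does nothing to the $y$-degree, the Euler-type operator $\sum y_l\,\partial_{y_k}$ is degree-preserving, and the linear combination $\sum a_{ij}f_j$ is obviously degree-preserving. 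Hence, if $f_i$ is homogeneous of degree $j$ in $y$, so is the $i$-th component of $[X,Y]$.

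Consequently, writing $Y=\sum_{j\geq 0}Y_j$ with $Y_j$ the homogeneous part of degree $j$, the bracket decomposes as
\begin{gather*}
[X,Y]=\sum_{j\geq 0}[X,Y_j],
\end{gather*}
where each summand $[X,Y_j]$ is itself homogeneous of degree $j$ in $y$. Since $[X,Y]=0$ and the homogeneous components of the zero vector field all vanish, I conclude $[X,Y_j]=0$ for every $j\geq 0$. Each $Y_j$ is a polynomial vertical vector field on $\mathcal V\times\mathbb{C}^n$ (polynomial in $y$ with analytic coefficients in $x\in\mathcal V$), and since it is vertical with $[X,Y_j]=0$, it is a section of ${\bf sym}_X$ over $\mathcal V$, proving the lemma.

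The only subtle point is to justify that the identification of homogeneous components is legitimate despite $\sum Y_j$ being an infinite series. This is harmless because the claim $[X,Y_j]=0$ is the vanishing of a specific polynomial vector field, which can be checked coefficient-by-coefficient in the monomials $y^{\alpha}$; equivalently, the degree-$j$ homogeneous part of $[X,Y]$ can be read off from a finite number of $y$-Taylor coefficients of the analytic $f_i$ at the origin, so the derivation above works in the ring of formal power series in $y$ and yields an honest identity of polynomial vector fields for each fixed $j$. I do not anticipate any real obstacle beyond this bookkeeping.
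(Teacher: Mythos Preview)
Your proof is correct and follows essentially the same approach as the paper: both arguments rest on the observation that $[X,\cdot]$ is homogeneous of degree zero in the $y$-variables, so the vanishing of $[X,Y]$ splits into the vanishing of each $[X,Y_j]$. The paper states this more tersely without writing out the coordinate formula for the bracket, whereas you verify the degree-preservation explicitly and add a remark on the formal-series bookkeeping, but the underlying idea is identical.
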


\begin{proof}
The Lie bracket can be computed componentwise because the map $[X,\bullet]$ is homogeneous of degree $0$ in its action
on vector f\/ields, so we have
\begin{gather*} 
0 = [Y,X] = [Y_0,X] + [Y_1, X] + [Y_2,X] + \cdots.
\end{gather*}
For each $j\geq 0$, $[X,Y_j]$ is a homogeneous polynomial vertical  vector f\/ield  of degree~$j$.
Thus, all the terms of the above series vanish and we have proved the result.
\end{proof}

\begin{Remark}\label{rMaclaurin}
The Maclaurin series of $Y$ depends only on the value of $Y$ in small neighbourhood of ${\mathcal V}\times \{0\}$ in ${\mathcal V}\times \mathbb C^n$.
Assume that $Y$ is a rational vertical symmetry of $X$,
\begin{gather*}
Y = \sum_{j=1}^n f_i(x,y)\frac{\partial}{\partial y_i},
\qquad f_i(x,y) \in \mathbb C(x,y_1,\ldots,y_n),
\end{gather*}
whose polar set does not contain the curve
$\mathbb{CP}_1\times \{0\}$ in $\mathbb{CP}_1\times \mathbb C^n$
(i.e., the denominators do not vanish indentically for $y=\vec{0}$).
Then it admits a Maclaurin expansion in $y$ and
Lemma~\ref{Maclaurin} shows that each homogeneous component
$Y_0$, $Y_1$, $Y_2$, etc.\ of $Y$ is a (homogeneous) polynomial vertical  symmetry with coef\/f\/icients in
$\mathbb C(x)$.
\end{Remark}

\subsection{Homogeneous polynomial vertical  symmetries}\label{section3.3}

The sheaf ${\bf sym}_X$ contains the subsheaf of \emph{polynomial vertical symmetries} that we denote by ${\bf sym}^{<\infty}_X$.
Lemma \ref{Maclaurin} implies that the homogeneous components
of polynomial vertical  sym\-met\-ries are also sym\-met\-ries. Hence, we have
a decomposition
\begin{gather*}
{\bf sym}^{<\infty}_X = \bigoplus_{n=0}^\infty {\bf sym}^{r}_X,
\end{gather*}
where ${\bf sym}^{r}_X$ stands for the sheaf of homogeneous polynomial vertical
symmetries of~$X$ of deg\-ree~$r$. These objects can be interpreted
simultaneously in two complementary ways, as sheaves and as dif\/ferential
varieties:
\begin{itemize}\itemsep=0pt
\item[(a)] As a sheaf, ${\bf sym}^{r}_X$ maps each open subset ${\mathcal V}\subseteq {\mathcal U}$ to
the set ${\bf sym}^{r}_X({\mathcal V})$ of homogeneous polynomial vertical  symmetries of $X$
def\/ined in ${\mathcal V}\times\mathbb C^n$.
\item[(b)] As a dif\/ferential variety, ${\bf sym}^{r}_X$ maps each dif\/ferential f\/ield
extension $\mathcal K\subseteq\mathcal F$ to the set ${\bf sym}^{r}_X(\mathcal F)$
of homogeneous polynomial vertical  symmetries of $X$ with coef\/f\/icients in~$\mathcal F$.
Since the Lie bracket is computed algebraically, this makes perfect sense even if $\mathcal F$ is not a~function f\/ield. Our forthcoming Theorem~\ref{t:sym_lv}, stated and proved in Section~\ref{section6} below, tells that ${\bf sym}^{r}_X$
is in fact a linear dif\/ferential variety def\/ined over~$\mathcal K$.
\end{itemize}

If the function f\/ield $\mathcal K$ contains the rational functions,
then rational symmetries, as vector f\/ields in $\mathbb{CP}_1\times\mathbb C^n$,
can be always reduced to polynomial vertical  symmetries with coef\/f\/icients in~$\mathcal K$:

\begin{Proposition}\label{Maclaurin2}
Assume that $\mathcal K$ contains the field of rational functions~$\mathbb C(x)$.
Let $Y$ be a~rational vector field in $\mathbb{CP}_1\times\mathbb C^n$ which is
a non-characteristic rational symmetry of
$X$ and whose polar subset does not include the curve ${\mathcal U}\times \{0\}$ in ${\mathcal U}\times \mathbb C^n$.
We consider the Maclaurin series
\begin{gather*}
Y - (Yx)X = Y_0 + Y_1 + Y_2 + \cdots,
\end{gather*}
where each $Y_r$ is a homogeneous polynomial vertical  vector field of degree~$r$.
Then, for each $r$, $Y_r\in {\bf sym}^r_X(\mathcal K)$ and for at least one index~$r$,
$Y_r$ is not zero.
\end{Proposition}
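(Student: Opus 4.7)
The plan is to reduce to a situation where Lemma~\ref{Maclaurin} applies, by checking that $\tilde Y := Y - (Yx)X$ is an honest vertical symmetry of $X$ admitting a Maclaurin expansion in $y$ with coefficients in $\mathcal K$, and then to use non-characteristicness to produce a non-zero homogeneous piece.

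First I would check that $\tilde Y$ is vertical: since $X$ has the form $\partial_x + \sum_{i,j} a_{ij}(x)y_j\partial_{y_i}$ we have $Xx=1$, so $\tilde Y x = Yx - (Yx)(Xx)=0$. Next I would check that $\tilde Y$ is a symmetry. Because $(Yx)X$ is a characteristic symmetry and characteristic symmetries form an ideal of the Lie algebra of symmetries (mentioned in Section~\ref{section2}), $\tilde Y$ is also a symmetry; more concretely, from $[Y,X]=\gamma X$ and the identity $[fX,X]=-(Xf)X$ one gets $[\tilde Y,X]=(\gamma+X(Yx))X$, a multiple of $X$. Combining this with verticality forces $[\tilde Y,X]=0$: indeed $[\tilde Y,X]$ is vertical because $[\tilde Y,X](x)=\tilde Y(1)-X(0)=0$, while being a multiple of $X$ it would otherwise have a non-zero $\partial_x$-component.

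Second I would set up the Maclaurin expansion. The hypothesis on the polar set of $Y$ means that $Y$ is regular along a non-empty open subset of ${\mathcal U}\times\{0\}$, so each component of $Y$ admits a Taylor expansion in $y_1,\dots,y_n$ whose coefficients are rational functions of $x$; the same is then true of the component $Yx$. Since $X$ itself is linear in $y$ with coefficients in ${\mathcal K}\supseteq\mathbb C(x)$, the product $(Yx)X$ also expands, and the Maclaurin series of $\tilde Y$ has coefficients in $\mathbb C(x)\subseteq\mathcal K$. Splitting into homogeneous pieces gives $\tilde Y = Y_0+Y_1+Y_2+\cdots$. Applying Lemma~\ref{Maclaurin} to $\tilde Y$ (which is a vertical symmetry, as established above) yields that each $Y_r$ is a vertical symmetry of $X$; being homogeneous of degree $r$ with coefficients in $\mathcal K$, it belongs to ${\bf sym}^{r}_X(\mathcal K)$.

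Finally I would argue that some $Y_r$ is non-zero. If every $Y_r$ vanished, then the Maclaurin series of $\tilde Y$ would be identically zero; as $\tilde Y$ is a rational vector field regular near ${\mathcal U}\times\{0\}$, this would force $\tilde Y=0$, i.e.\ $Y=(Yx)X$, contradicting the assumption that $Y$ is non-characteristic.

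The calculations are all routine; the only delicate point is the bookkeeping around the polar set, namely ensuring that subtracting the characteristic part $(Yx)X$ does not move the pole locus onto ${\mathcal U}\times\{0\}$. This is the one step I would write out carefully, using that $(Yx)$ has the same denominator structure as the other components of $Y$ and that $X$ is holomorphic in $y$.
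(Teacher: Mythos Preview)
Your proposal is correct and follows essentially the same route as the paper: pass to the vertical representative $\tilde Y = Y - (Yx)X$, invoke Lemma~\ref{Maclaurin} (via Remark~\ref{rMaclaurin}) to see each $Y_r$ is a symmetry, and then verify the coefficients lie in $\mathcal K$. The paper is terser---it writes out the coordinate formula $H_j(x,y) = f_j(x,y) - h(x,y)\sum_i a_{ji}(x)y_i$ and reads off that the Taylor coefficients in $y$ land in $\mathcal K$, and it does not spell out the non-vanishing argument---while you are more explicit about verticality, $[\tilde Y,X]=0$, and the non-characteristic step; but the substance is the same.
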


\begin{proof}
Lemma \ref{Maclaurin} and its Remark \ref{rMaclaurin} show that
the vector f\/ields $Y_r$ are symmetries. We only need  to check that they have coef\/f\/icients
in $\mathcal K$. Let us consider the expression of $Y$ in coordinates
\begin{gather*}
Y = h(x,y)\frac{\partial}{\partial x} + \sum_{j=1}^n f_j(x,y)\frac{\partial}{\partial y_j}.
\end{gather*}
Then
\begin{gather*}
Y - (Yx)X = \sum_{j=1}^n H_j(x,y)\frac{\partial}{\partial y_j},\qquad
H_j(x,y) = \sum_{i=1}^n (f_j(x,y) - h(x,y)a_{ji}(x)y_i),
\end{gather*} and
\begin{gather*}
Y_r = \sum_{j=1}^n\sum_{|\alpha| = r }
\frac{\partial^{|\alpha|} H_j}{\partial y^\alpha}(x,0)\frac{y^{\alpha}}{\alpha!}
\frac{\partial}{\partial y_j}.
\end{gather*}
A direct examination of the expressions shows that they have coef\/f\/icients in $\mathcal K$.
\end{proof}

Since the Lie bracket is a graded operation, the sheaves ${\bf sym}_X^r$ are not in general
Lie algebra sheaves. We have
\begin{gather*}
[\,\,,\,]\colon \ {\bf sym}_X^r \times {\bf sym}_X^s \to {\bf sym}_X^{r+s-1}.
\end{gather*}
For $n>1$ only ${\bf sym}_X^0$, ${\bf sym}_X^1$ and ${\bf sym}_X^0\oplus {\bf sym}_X^1$
are Lie algebra sheaves. Our next objective is to show that the sections of ${\bf sym}_X^r$
for each $r$ are solutions of a hierarchy of  linear dif\/ferential systems
canonically attached to~\eqref{LE}.

\section[Polynomial vector f\/ields in $\mathbb C^n$]{Polynomial vector f\/ields in $\boldsymbol{\mathbb C^n}$}\label{section4}

\subsection{The Lie algebra of polynomial vector f\/ields}\label{pol_vf}

Polynomial vertical  symmetries are polynomial vector f\/ields
along the f\/ibers of the projection ${\mathcal U} \times \mathbb C^n
\to \mathcal U$. In this section we will give some remarks about the structure
of the Lie algebra~$\mathfrak X[\mathbb C^n]^{<\infty}$ of
\emph{polynomial vector fields} in~$\mathbb C^n$
\begin{gather*}
\mathfrak X[\mathbb C^n]^{<\infty} =
\left\{\sum_{i=1}^n P_i(y)\frac{\partial}{\partial y_i} \colon  P_i(y)\in \mathbb C[y_1,\ldots,y_n]\right\}.
\end{gather*}
By taking homogeneous components, we have
 $\mathfrak X[\mathbb C^n] = \bigoplus_{r=0}^\infty \mathfrak X^r[\mathbb C^n]$,
where
\begin{gather*}
\mathfrak X^r[\mathbb C^n] =
\left\{\sum_{i=1}^n P_i(y)\frac{\partial}{\partial y_i}\colon P_i(y)\, \mbox{homogeneous of degree} \ r\right\}.
\end{gather*}
The Lie bracket respects the degree in the following way
\begin{gather*}
[\,,\,]\colon \ \mathfrak X^r[\mathbb C^n]\times
\mathfrak X^s[\mathbb C^n]
\to \mathfrak X^{r+s-1}[\mathbb C^n].
\end{gather*}

\begin{Remark}
For $n > 2$, exactly two of the homogeneous components of
$\mathfrak X[\mathbb C^n]$ are Lie subalgebras:
\begin{itemize}\itemsep=0pt
\item[(a)] The homogeneous component of degree zero $\mathfrak X^0[\mathbb C^n]$. It is the Lie algebra of the inf\/i\-ni\-te\-si\-mal generators of the action of the group of translations in
$\mathbb C^n$. It is an abelian Lie algebra canonically
isomorphic to~$\mathbb C^n$
\begin{gather*}
\mathfrak X^0[\mathbb C^n]\simeq \mathbb C^n, \qquad \frac{\partial}{\partial y_j}\mapsto e_j,
\end{gather*}
where  $\{e_1,\ldots,e_n\}$ stands for the canonical basis of $\mathbb C^n$.
\item[(b)] The homogeneous component of degree one, $\mathfrak X^1[\mathbb C^n]$. It is the Lie algebra of linear vector f\/ields
in~$\mathbb C^n$. It consists of the inf\/initesimal generators of
the action of the group of linear transformations of~$\mathbb C^n$.
It is canonically isomorphic to~$\mathfrak{gl}(n,\mathbb C)$
in the following sense:\vspace{-2mm}
\begin{gather*}
\mathfrak X^1[\mathbb C^n]\simeq \mathfrak{gl}(n,\mathbb C),
\qquad \sum_{i,j=1}^n a_{ij}y_j\frac{\partial}{\partial y_i}\mapsto A,
\end{gather*}
where $A$ stands for $n\times n$ matrix of entries $a_{ij}$.

Given an endomorphism $A\in\mathfrak{gl}(n,\mathbb C)$, we let
$\vec v_A$ denote  its corresponding linear vector f\/ield
in $\mathbb C^n$\vspace{-2mm}
\begin{gather*}
\vec v_A := \sum_{i,j=1}^n a_{ij}y_j\frac{\partial}{\partial y_i}
\quad \textrm{with the identity} \quad
\vec v_A(y) = \left.\frac{d}{d\varepsilon}\right|_{\varepsilon = 0} e^{\varepsilon A}y.
\end{gather*}
We may easily check that this morphism is in fact an anti-isomorphism of Lie algebras: for
any pair $(A,B)$ of matrices, we have
\begin{gather*}
[\vec v_A, \vec v_B] + \vec v_{[A,B]} = 0.
\end{gather*}
\end{itemize}
\end{Remark}

\subsection{Induced linear actions} \label{InducedLinearActions}

Let us consider the canonical action of ${\rm GL}(n,\mathbb C)$
on $\mathbb C^n$ by linear transformations
\begin{gather*}
{\rm GL}(n,\mathbb C) \times \mathbb C^n \to \mathbb C^n,
\qquad (A,y)\mapsto Ay,
\end{gather*}
By abuse of notation, we denote non degenerate matrices and their associated linear
transformation on $\mathbb C^n$ by the same symbols.
Let $A$ be a linear transformation and $Y$ be a homogeneous
polynomial vector f\/ield of degree $r$.

Viewing $A$ as a (linear) dif\/feomorphism of ${\mathbb C}^n$,
we let $A_*(Y)$ denote the vector f\/ield transformed by $A$. In general, for any dif\/feomorphism $F$,
we let $F_*(Y)(F(p)) = dF(Y(p))$,
i.e.,
\begin{gather*}
F_*(Y)(p) = dF\big(Y\big(F^{-1}(p)\big)\big).
\end{gather*}
This def\/ines a natural action of dif\/feomorphisms of ${\mathbb C}^n$  on vector f\/ields of~${\mathbb C}^n$.

It is easy to check that
$A_*(Y)$ is also a  homogeneous polynomial vector f\/ield of degree~$r$.
Thus, for each $r\geq 0$ we have an induced representation
\begin{gather*}
\Phi_r\colon \ {\rm GL}(n,\mathbb C) \to
{\rm GL}\big(\mathfrak X^r\big[\mathbb C^n\big]\big),\qquad \Phi_r(A)(Y) = A_*(Y),
\end{gather*}
which yields a linear representations of ${\rm GL}(n,\mathbb C)$
on the f\/inite-dimensional vector spa\-ces~$\mathfrak X^r[\mathbb C^n]$.
This action can be dif\/ferentiated at the identity
obtaining the inf\/initesimal action
\begin{gather*}\Phi'_r\colon \ \mathfrak{gl}(n,\mathbb C) \to
{\rm End}\big(\mathfrak X^r\big[\mathbb C^n\big]\big),\qquad
\Phi'_r(A)(Y) = \left.\frac{d}{d\varepsilon}\right|_{\varepsilon = 0}
\Phi\big(e^{\varepsilon A}\big)(Y).
\end{gather*}

The following remark is  key to connect the def\/inition of symmetry
with the dif\/ferential Galois theoretic aspects
of equation~(\ref{LE}).

\begin{Lemma}\label{keylemma}
The infinitesimal action $\Phi'$
of $\mathfrak{gl}(n,\mathbb C)$
in $\mathfrak X^r[\mathbb C^n]$
coincides up to a change of sign, by the canonical isomorphism between
$\mathfrak X^1[\mathbb C^n]$ and $\mathfrak{gl}(n,\mathbb C)$,
with the Lie bracket action of linear vector fields on~$\mathfrak X^r[\mathbb C^n]$. That is, for any endomorphism~$A$
and homogeneous polynomial vector field~$Y$  in~$\mathbb C^n$
\begin{gather*}
\Phi'_r(A)(Y) = - [\vec v_A, Y].
\end{gather*}
\end{Lemma}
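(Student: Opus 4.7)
The plan is to use the standard identification between the infinitesimal action of a Lie group via pushforward and the Lie derivative. The strategy has three short steps.

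First, I would identify the flow of the linear vector field $\vec v_A$. By definition, $\vec v_A(y) = \frac{d}{d\varepsilon}\big|_{\varepsilon=0} e^{\varepsilon A} y$, so the one-parameter group $\phi_\varepsilon(y) = e^{\varepsilon A} y$ on $\mathbb C^n$ is exactly the flow of $\vec v_A$. Consequently $\Phi_r(e^{\varepsilon A})(Y) = (e^{\varepsilon A})_*(Y) = (\phi_\varepsilon)_*(Y)$.

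Second, I would invoke the classical formula relating the Lie derivative of a vector field to the pushforward by a flow: for any smooth vector field $V$ on $\mathbb C^n$ with flow $\phi_\varepsilon$, and any vector field $Y$,
\begin{gather*}
\mathcal L_V Y = [V,Y] = -\frac{d}{d\varepsilon}\bigg|_{\varepsilon=0} (\phi_\varepsilon)_*(Y).
\end{gather*}
The sign comes from the fact that $(\phi_\varepsilon)_* = (\phi_{-\varepsilon})^*$ and $\mathcal L_V Y$ is defined as $\lim_{\varepsilon \to 0} \varepsilon^{-1}\bigl((\phi_\varepsilon)^*Y - Y\bigr)$. Applied to $V = \vec v_A$ with flow $\phi_\varepsilon(y) = e^{\varepsilon A}y$, this gives
\begin{gather*}
\Phi'_r(A)(Y) = \frac{d}{d\varepsilon}\bigg|_{\varepsilon=0}(\phi_\varepsilon)_*(Y) = -[\vec v_A, Y],
\end{gather*}
which is exactly the statement. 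One can check consistency by noting that both sides are homogeneous of degree $r$: $\Phi'_r(A)$ maps $\mathfrak X^r[\mathbb C^n]$ to itself by construction, and $[\vec v_A, \cdot]$ preserves degree because $\vec v_A \in \mathfrak X^1[\mathbb C^n]$.

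The only genuinely delicate point is the sign, which is the reason the lemma is phrased ``up to a change of sign''. Since the paper has already observed in the previous remark that $A \mapsto \vec v_A$ is an \emph{anti}-isomorphism of Lie algebras (i.e., $[\vec v_A, \vec v_B] = -\vec v_{[A,B]}$), this sign is consistent: the minus here is the same one that makes the matrix-to-vector-field correspondence an anti-homomorphism. If a reader prefers an algebraic verification, one can alternatively expand $\Phi_r(e^{\varepsilon A})(Y)(p) = e^{\varepsilon A}\,Y(e^{-\varepsilon A}p)$ directly in coordinates, differentiate at $\varepsilon=0$, and match the result with the coordinate formula for $[\vec v_A, Y]$; but the flow/Lie-derivative argument above is the conceptually cleanest route and avoids any computation.
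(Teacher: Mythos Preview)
Your proof is correct and follows essentially the same route as the paper's own argument: identify $\phi_\varepsilon(y)=e^{\varepsilon A}y$ as the flow of $\vec v_A$, write $\Phi'_r(A)(Y)=\frac{d}{d\varepsilon}\big|_{\varepsilon=0}(\phi_\varepsilon)_*Y$, and invoke the standard identity $\frac{d}{d\varepsilon}\big|_{\varepsilon=0}(\phi_\varepsilon)_*Y=-\mathrm{Lie}_{\vec v_A}Y=-[\vec v_A,Y]$. Your version is slightly more explicit about the sign convention, but the content is identical.
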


\begin{proof}
Let us def\/ine, for each $A$ and $\varepsilon$,
$\sigma_\varepsilon\colon \mathbb C^n\to \mathbb C^n$, the map that sends
each $y\in\mathbb C^n$ to $e^{\varepsilon A}y$. Thus,
$\{\sigma_\varepsilon\}_{\varepsilon\in\mathbb C}$ is the f\/low of the vector
f\/ield $\vec v_A$. We have
\begin{gather*}\Phi'_r(A)(Y) = \left.\frac{d}{d\varepsilon}\right|_{\varepsilon = 0}
\Phi\big(e^{\varepsilon A}\big)(Y) =
\left.\frac{d}{d\varepsilon}\right|_{\varepsilon = 0} \sigma_{\varepsilon*}(Y)
= -{\rm Lie}_{\vec v_A} Y = -[\vec v_A,Y],\end{gather*}
by the usual geometric def\/inition of Lie derivative.
\end{proof}

\newpage

\section{Lie--Vessiot hierarchy and Galois group}\label{section5}

\subsection{The Lie--Vessiot hierarchy}\label{section5.1}

Here we recall some of the def\/initions from \cite{BM2012}, adapted to the particular case of linear equations.
Let $E$ be a f\/inite-dimensional complex vector space and $(E,\Psi)$ a linear representation
of~${\rm GL}(n,\mathbb C)$ in~$E$. The group morphism $\Psi$ induces
a Lie algebra morphism $\Psi'$:
\begin{gather*}\Psi\colon \  {\rm GL}(n,\mathbb C) \mapsto {\rm GL}(E),\qquad
\Psi'\colon \ \mathfrak{gl}(n,\mathbb C) \to {\rm End}(E).\end{gather*}
This morphism transports our linear dif\/ferential system \eqref{LE} to
a linear dif\/ferential system in ${\mathcal U} \times E$ with coef\/f\/icients in~$\mathcal K$:
\begin{gather}\label{induced}
\frac{dv}{dx} = \Psi'(A(x))(v).
\end{gather}
We say that  system (\ref{induced}) is the \emph{Lie--Vessiot system} induced by \eqref{LE}
in the representation $(E,\Psi)$.  They  are the geometric analog of the dif\/ferential systems obtained by Tannakian correspondence on tensor constructions in standard dif\/ferential Galois theory, see \cite{Katz,PutSinger2003}.

A  solution of the Lie--Vessiot system \eqref{induced} in ${\mathcal V}\subset {\mathcal U}$ is
an analytic map ${\mathcal V}\to E$ that satisf\/ies the equations.
Given a dif\/ferential
f\/ield extension $\mathcal K \subseteq \mathcal F$, a solution of the Lie--Vessiot system~\eqref{induced} in~$\mathcal F$
can be thought of as being an element of $E\otimes_{\mathbb C}\mathcal F$. If we take
a basis $v_1,\ldots,v_n$ of $E$ and denote by $\lambda_1,\ldots,\lambda_n$ their
corresponding linear coordinate functions, the functions $b_{ij}(x) = \lambda_i(\Psi'(A(x))v_j)$
are elements of~$\mathcal K$ and the dif\/ferential equation can be written
in coordinates
\begin{gather}\label{LE2}
\frac{d\lambda}{dx} = B(x)\lambda, \qquad \lambda = (\lambda_1,\ldots,\lambda_m), \qquad B(x)\in \mathfrak{gl}(m,\mathcal K).
\end{gather}
Note that, as the Lie--Vessiot construction is a Lie algebra morphism, the poles of $B(x)$ are exactly the poles of $A(x)$.

There is a natural relation between  solutions of~\eqref{LE} and of its induced Lie--Vessiot system~\eqref{induced}:
if $M(x)$ is a fundamental matrix of solutions of~\eqref{LE} def\/ined
in ${\mathcal V}\subseteq {\mathcal U}$ then, for all $v_0\in E$, $v(x) = \Psi(M(x))v_0$ is a particular
solution of the Lie--Vessiot system~\eqref{induced}.

\subsection{The Galois group}

The Lie--Vessiot systems induced by \eqref{LE} form a
hierarchy of dif\/ferential equations which encode the dif\/ferential algebraic properties
of \eqref{LE}. It allows us to give a ``geometric'' def\/inition of the dif\/ferential
Galois group. In this def\/inition we are concerned with two kind of solutions of
the Lie--Vessiot systems:
\begin{itemize}\itemsep=0pt
\item[(a)] We say that a solution $v(x)$ of the Lie--Vessiot system \eqref{induced} is
\emph{$\mathcal K$-rational} if it belongs to $E\otimes_{\mathbb C}\mathcal K$. This means
that $v(x)$ has its coordinates \eqref{LE2} in $\mathcal K$.
\item[(b)] We say that an element $w(x)$ of $E\otimes_{\mathbb C}\mathcal K$ is a
\emph{$\mathcal K$-exponential
pre-solution} of the Lie--Vessiot system \eqref{induced} if there is a function $b(x)\in \mathcal K$ such that
\begin{gather*}
\frac{dw}{dx} - \Psi'(A(x))(w) = -b(x)w.
\end{gather*}
It models the case in which $v(x) = \exp(\int b(x)dx)w(x)$ is a solution of the Lie--Vessiot system \eqref{induced}
or, equivalently, the class $\langle v(x)\rangle$ is a rational solution of
the equation in the projective space $\mathbb P(E)$ obtained by reducing the Lie--Vessiot system \eqref{induced}
by the Euler homogeneous vector f\/ield (see Example \ref{ex:1}) of symmetries.
\end{itemize}

Note that the concept of $\mathcal K$-exponential pre-solution extends that of
$\mathcal K$-rational solution: any $\mathcal K$-rational solution is a
$\mathcal K$-exponential pre-solution in which the multiplier $b(x)$ vanishes.

\begin{Definition} Let us f\/ix an $x_0\in\mathcal U$ which is not a singularity of \eqref{LE}.
We say that a non-degenerate matrix $\sigma\in{\rm GL}(n,\mathbb C)$
is Galoisian at $x_0$ if for any
linear representation $(E,\Psi)$ it satisf\/ies the two following conditions:
\begin{itemize}\itemsep=0pt
\item[(a)] For any $\mathcal K$-rational solution $v(x)$ of
any induced Lie--Vessiot system \eqref{induced}, $\Psi(\sigma)(v(x_0)) = v(x_0)$.
Note that,
if $v(x)$ is a $\mathcal K$-rational solution
then $v(x_0)$ is well def\/ined as an element of~$E$.
\item[(b)] For any $\mathcal K$-exponential pre-solution $w(x)$
of any induced Lie--Vessiot system~\eqref{induced}, for which~$w(x_0)$ is well def\/ined,
$w(x_0)$ is an eigenvector of~$\Psi(\sigma)$.
\end{itemize}
\end{Definition}

The Galoisian matrices at $x_0$ form a
group ${\rm Gal}(x_0,X)$, called the \emph{Galois group} of~\eqref{LE}
at the point $x_0$. It is the stabilizer of all the values at $x_0$ of $\mathcal K$-rational
solutions, and the lines spanned by the values at $x_0$ of $\mathcal K$-exponential pre-solutions
of induced Lie--Vessiot systems.

Although this geometric def\/inition may seem dif\/ferent from the standard ones from Picard--Vessiot theory, it produces the same group. Choose a normalized local solution matrix at $x_0$, i.e., one with initial condition being the identity at the point $x_0$; then
$v(x_0)$ will be the coordinates of the invariant~$v(x)$ on this normalized basis of solutions. Our def\/inition hence says that a~mat\-rix~$\sigma$ is in~${\rm Gal}(x_0,X)$ if and only if it admits all the (semi-)invariants of the (Picard--Vessiot) dif\/ferential Galois group as (semi-)invariants.

The following facts are well known in
dif\/ferential Galois theory (we refer the interested reader to \cite{CrespoHajto2011, PutSinger2003} for
a general exposition, or to \cite{BM2010, BM2012} for an exposition which is
consistent with our geometric def\/inition):
\begin{itemize}\itemsep=0pt
\item[(a)] The Galois group ${\rm Gal}(x_0,X)$ is an algebraic subgroup of
${\rm GL}(n,\mathbb C)$.
\item[(b)] The Galois groups at two dif\/ferent non-singular points $x_0$ and
$x_1$ are conjugated. We will write ${\rm Gal}(X)$ to denote this abstract
Galois group, that we call the Galois group of the equation \eqref{LE} over $\mathcal K$.
\item[(c)] The system \eqref{LE} is integrable by Liouvillian functions if and only if
${\rm Gal}(x_0,X)$ is a virtually solvable group, i.e., its identity component is solvable.
\end{itemize}

The next lemma encodes the expected Galois correspondence.
The reader may check
\cite[Proposition~5.4]{BM2010} for a geometric proof
that relies on Lie's reduction method and Chevalley theorem.

\begin{Lemma}\label{invariant}
Let $(E,\Psi)$ be a linear representation of ${\rm GL}(n,\mathbb C)$ and
let $z(x)$ be a solution of its corresponding induced Lie--Vessiot
system~\eqref{induced}. Then, $z(x)$ is a $\mathcal K$-rational
solution if and only if for all Galoisian matrices at $x_0$, $\sigma\in{\rm Gal}(x_0,X)$,
we have $\Psi(\sigma)z(x_0) = z(x_0)$.
\end{Lemma}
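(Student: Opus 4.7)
The plan is to handle the two implications separately, with the forward direction being tautological and the reverse direction requiring the Galois correspondence machinery of \cite{BM2010}.

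For the $(\Rightarrow)$ direction, suppose $z(x) \in E \otimes_{\mathbb C} \mathcal K$ is a $\mathcal K$-rational solution of the induced Lie--Vessiot system associated to $(E,\Psi)$. Then $z(x_0)$ is a well-defined element of $E$, and condition~(a) in the definition of a Galoisian matrix applied directly to the representation $(E,\Psi)$ yields $\Psi(\sigma)z(x_0) = z(x_0)$ for every $\sigma \in {\rm Gal}(x_0,X)$. This step is essentially a reading of the definition.

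For the $(\Leftarrow)$ direction, I would first introduce the stabilizer
\[
H := \bigl\{ g \in {\rm GL}(n,\mathbb C) : \Psi(g)z(x_0) = z(x_0) \bigr\},
\]
which is a Zariski-closed subgroup of ${\rm GL}(n,\mathbb C)$ containing ${\rm Gal}(x_0,X)$ by hypothesis. Fixing a local fundamental matrix $M(x)$ of $X$ normalized by $M(x_0) = I$, the unique local solution of \eqref{induced} with initial condition $z(x_0)$ is $z(x) = \Psi(M(x))z(x_0)$, so the remaining task is to show that this analytic germ actually belongs to $E \otimes_{\mathbb C} \mathcal K$.

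The key input, which is Proposition~5.4 of \cite{BM2010}, is Chevalley's theorem combined with Lie's reduction method: every closed algebraic subgroup of ${\rm GL}(n,\mathbb C)$ arises as the stabilizer of a line in some tensor construction, and under this correspondence the $\mathcal K$-rational sections of induced Lie--Vessiot systems are exactly those initial data $v_0 \in E$ whose ${\rm Gal}(x_0,X)$-orbit is the single point $\{v_0\}$. Since the homogeneous space ${\rm GL}(n,\mathbb C)/H$ is a quotient of ${\rm GL}(n,\mathbb C)/{\rm Gal}(x_0,X)$ and the latter carries the universal $\mathcal K$-rational structure of the Lie--Vessiot hierarchy, the image of $z(x)$ in this quotient is $\mathcal K$-rational, hence so is $z(x)$ itself. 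The main technical obstacle is the faithful translation between the geometric (initial-value) definition of the Galois group used here and the Tannakian statement that Galois-fixed initial data correspond to rational solutions; however, this equivalence is precisely what \cite[Proposition~5.4]{BM2010} provides, so the proof can be completed by citing that reference.
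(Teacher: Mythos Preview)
Your proposal is correct and matches the paper's own treatment: the paper does not write out a proof either but simply refers the reader to \cite[Proposition~5.4]{BM2010}, noting that it relies on Lie's reduction method and the Chevalley theorem, exactly as you do. Your added unpacking of the forward direction (tautological from condition~(a) in the definition of a Galoisian matrix) and your sketch of the stabilizer argument for the reverse direction are consistent with that reference, so there is nothing to correct.
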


\section{Symmetries vs Galois}\label{section6}

\subsection{Symmetries as solutions of the Lie--Vessiot hierarchy}\label{section6.1}

A key of the relation between the Galois group and the symmetries
is the fact that polynomial vertical  vector f\/ields in ${\mathcal V}\subseteq {\mathcal U}$ of can be seen as
maps ${\mathcal V}\to \mathfrak X[\mathbb C^n]$. For a given polynomial vertical
vector f\/ield $Y$ and $x_0\in {\mathcal V}$ we will write~$Y(x_0)$ for \emph{the value of~$Y$ at~$x_0$}. It is a~polynomial vector f\/ield in $\mathbb C^n$ that corresponds
to the restriction of~$Y$ to the f\/ibre $\{x_0\}\times\mathbb C^n$. It is clear that,
for general vertical vector f\/ields, $[Y,Z](x_0) = [Y(x_0),Z(x_0)]$.
If we restrict our considerations
to homogeneous polynomial vertical  vector f\/ields of a f\/ixed degree $r$,  then
$\mathfrak X^r[\mathbb C^n]$ turns out to be a~f\/inite-dimensional complex
space. This will allow us to describe polynomial vertical  symmetries
as solutions of some systems of the Lie--Vessiot hierarchy.

\begin{Theorem}\label{t:sym_lv}
Let $Y$ be a homogeneous polynomial vertical  vector field of degree $r$
defined in ${\mathcal V}\times\mathbb C^n$ with ${\mathcal V}\subseteq {\mathcal U}$. Then, $Y$ is a symmetry of~\eqref{LE} if and only if, as a map from~${\mathcal V}$ to~$\mathfrak X^r[\mathbb C^n]$,
it is a solution of the Lie--Vessiot system induced in
the representation $(\Phi_r, \mathfrak X^r[\mathbb C^n])$ from Section~{\rm \ref{InducedLinearActions}}.

In other words,
$Y\in {\bf sym}_X({\mathcal V})$ if and only if it satisfies
\begin{gather}\label{induced2}
\frac{d Y}{dx} = \Phi_r'(A(x))Y.
\end{gather}
\end{Theorem}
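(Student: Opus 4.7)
The plan is to decompose the vector field $X$ into its horizontal and vertical parts and compute the Lie bracket $[X,Y]$ directly, showing that the resulting equation is precisely \eqref{induced2}.

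First, I would write $X = \partial/\partial x + \vec{v}_{A(x)}$, where for each fixed $x \in \mathcal{V}$, $\vec{v}_{A(x)}$ is the linear vector field in $\mathbb{C}^n$ associated to the matrix $A(x) \in \mathfrak{gl}(n,\mathcal{K})$ as in Section~\ref{pol_vf}. Since $Y$ is vertical and homogeneous polynomial of degree $r$, I would view it either as $\sum_i f_i(x,y)\partial/\partial y_i$ with $f_i(x,\cdot)$ homogeneous of degree $r$, or equivalently as an analytic map $Y \colon \mathcal{V} \to \mathfrak{X}^r[\mathbb{C}^n]$. The key point is that these two viewpoints identify $\partial_x$-differentiation (treating $y$ as a parameter) with the derivative $dY/dx$ of the $\mathfrak{X}^r[\mathbb{C}^n]$-valued map.

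Next I would expand the bracket using bilinearity and the fact that $Y$ is vertical:
\begin{gather*}
[X, Y] \;=\; \bigl[\partial/\partial x,\, Y\bigr] + \bigl[\vec{v}_{A(x)},\, Y\bigr] \;=\; \frac{dY}{dx} + \bigl[\vec{v}_{A(x)}, Y\bigr].
\end{gather*}
The first summand is computed componentwise, since $[\partial/\partial x, f_i(x,y)\partial/\partial y_i] = (\partial f_i/\partial x)\partial/\partial y_i$; this is exactly what is meant by $dY/dx$ in the Lie--Vessiot equation. For the second summand, I would invoke Lemma~\ref{keylemma} pointwise in $x$: since $\vec{v}_{A(x)} \in \mathfrak{X}^1[\mathbb{C}^n]$ and $Y(x) \in \mathfrak{X}^r[\mathbb{C}^n]$, the graded structure guarantees $[\vec{v}_{A(x)}, Y(x)] \in \mathfrak{X}^{r}[\mathbb{C}^n]$, and the lemma identifies it with $-\Phi'_r(A(x))(Y(x))$.

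Combining, $[X,Y] = dY/dx - \Phi'_r(A(x))(Y)$, so $[X,Y]=0$ if and only if \eqref{induced2} holds. Since $Y$ is vertical, the conditions $[X,Y]=0$ and ``$Y$ is a symmetry'' coincide (as recalled after~\eqref{VS}), which yields the equivalence. There is no real obstacle: the only step that requires care is the clean identification of the termwise $x$-derivative of a vertical polynomial vector field with the derivative of the corresponding $\mathfrak{X}^r[\mathbb{C}^n]$-valued map, and the rest is a direct application of Lemma~\ref{keylemma}.
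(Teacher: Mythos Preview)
Your proof is correct and follows essentially the same approach as the paper: write $X = \partial/\partial x + \vec v_{A(x)}$, expand $[X,Y] = dY/dx + [\vec v_{A(x)},Y]$, and invoke Lemma~\ref{keylemma} to rewrite the second term as $-\Phi_r'(A(x))(Y)$. Your version is in fact slightly more explicit about the identification between the componentwise $x$-derivative and the derivative of the $\mathfrak X^r[\mathbb C^n]$-valued map, but the argument is the same.
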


\begin{proof} Recall that if $\vec v_A=\vec v_A(x)$ is the linear vertical  vector f\/ield in ${\mathcal U}\times\mathbb C^n$
corresponding to the matrix $A(x)$, then $X = \frac{\partial}{\partial x} + \vec v_A(x)$.
Let us compute the Lie bracket
\begin{gather*}
[X,Y] = \left[\frac{\partial}{\partial x} + \vec v_A(x), Y \right] = \frac{dY}{dx} +
[\vec v_A(x),Y].
\end{gather*}
Thus, $Y$ is a symmetry if and only if $\frac{dY}{dx} = - [\vec v_A(x), Y]$. Finally,
by Lemma~\ref{keylemma} we have that~$Y$ satisf\/ies the stated dif\/ferential equation if and
only if it is a symmetry.
\end{proof}

If we denote by $\mathfrak X[\mathbb C^n]^{<\infty}$ the polynomial vector f\/ields in $\mathbb C^n$, then:

\begin{Corollary}\label{t:sym_ci}
Let $x_0$ be a non-singular point, and ${\mathcal V}$ a simply-connected neighbourhood of~$x_0$
in~${\mathcal U}$.
Then, for each polynomial vector field $Y^{(0)}\in \mathfrak X[\mathbb C^n]^{<\infty}$ there is
a unique
polynomial ver\-tical  symmetry $Y\in {\bf sym}_X({\mathcal V})^{<\infty}$ such that $Y(x_0) = Y^{(0)}$. Moreover,
${\bf sym}_X({\mathcal V})^{<\infty}$ and $\mathfrak X[\mathbb C^n]^{<\infty}$ are isomorphic Lie algebras.
\end{Corollary}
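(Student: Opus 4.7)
The proof is a direct application of Theorem~\ref{t:sym_lv} combined with Lemma~\ref{Maclaurin} and the standard Cauchy--Picard existence/uniqueness theorem for linear ODEs with analytic coefficients on a simply connected domain. Here is how I would proceed.

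First, I would work degree by degree. Decompose the prescribed initial datum into homogeneous components $Y^{(0)}=Y^{(0)}_0+Y^{(0)}_1+\cdots+Y^{(0)}_N\in\bigoplus_{r=0}^N\mathfrak X^r[\mathbb C^n]$ (a finite sum). By Theorem~\ref{t:sym_lv}, a homogeneous polynomial vertical vector field $Y_r$ of degree $r$ on ${\mathcal V}\times\mathbb C^n$ is a symmetry of $X$ if and only if, viewed as a map ${\mathcal V}\to\mathfrak X^r[\mathbb C^n]$, it satisfies the linear differential equation~\eqref{induced2} on the finite-dimensional complex vector space $\mathfrak X^r[\mathbb C^n]$. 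The coefficients of this equation are the components of $\Phi'_r(A(x))$, and since $x_0$ is non-singular, $A(x)$ is holomorphic on a neighbourhood of $x_0$; in fact its poles (the singularities of~\eqref{LE}) avoid the whole of~${\mathcal V}$ if we take ${\mathcal V}$ to consist only of non-singular points. Thus the Lie--Vessiot equation in degree $r$ is a linear system with holomorphic coefficients on the simply connected domain~${\mathcal V}$, and Cauchy--Picard produces a unique holomorphic solution $Y_r\colon{\mathcal V}\to\mathfrak X^r[\mathbb C^n]$ with $Y_r(x_0)=Y^{(0)}_r$.

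Setting $Y=Y_0+Y_1+\cdots+Y_N$ yields a polynomial vertical vector field on ${\mathcal V}\times\mathbb C^n$ whose value at $x_0$ is $Y^{(0)}$ and each of whose homogeneous components is a symmetry, hence $Y\in{\bf sym}_X({\mathcal V})^{<\infty}$. For uniqueness, let $\widetilde Y\in{\bf sym}_X({\mathcal V})^{<\infty}$ also satisfy $\widetilde Y(x_0)=Y^{(0)}$. By Lemma~\ref{Maclaurin} its homogeneous components $\widetilde Y_r$ are themselves symmetries of $X$ and hence solutions of the same linear system~\eqref{induced2}; since $\widetilde Y_r(x_0)=Y^{(0)}_r=Y_r(x_0)$, uniqueness of the Cauchy problem gives $\widetilde Y_r=Y_r$ for every $r$, and hence $\widetilde Y=Y$. (Note that this argument also shows automatically that only finitely many $Y_r$ are non-zero: those with $Y^{(0)}_r=0$ must give $Y_r\equiv 0$.)

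Finally, the evaluation map $\mathrm{ev}_{x_0}\colon {\bf sym}_X({\mathcal V})^{<\infty}\to\mathfrak X[\mathbb C^n]^{<\infty}$, $Y\mapsto Y(x_0)$, is bijective by what we have just shown. It is a Lie algebra homomorphism: vertical vector fields evaluated at a fixed $x_0$ satisfy $[Y,Z](x_0)=[Y(x_0),Z(x_0)]$, as pointed out in Section~\ref{section6.1}, because the Lie bracket of two vertical polynomial vector fields is computed fibrewise in the $y$-variables. Hence $\mathrm{ev}_{x_0}$ is a Lie algebra isomorphism, proving the second claim.

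The only subtle point is ensuring that one can genuinely take ${\mathcal V}$ to be a simply connected neighbourhood \emph{containing no singularities} of~$X$, so that the coefficients of~\eqref{induced2} are holomorphic throughout~${\mathcal V}$; this is harmless because the singularities of~\eqref{LE} form a discrete subset of~${\mathcal U}$, so such a ${\mathcal V}$ exists around any non-singular $x_0$. Everything else is formal bookkeeping around the Cauchy--Picard theorem applied degree by degree.
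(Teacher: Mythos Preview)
Your proof is correct and follows essentially the same approach as the paper: decompose the initial datum into homogeneous components, solve the Cauchy problem for each degree-$r$ Lie--Vessiot system~\eqref{induced2} given by Theorem~\ref{t:sym_lv}, and sum. Your treatment is in fact slightly more explicit than the paper's on two points: you invoke Lemma~\ref{Maclaurin} to justify that the homogeneous components of a competing symmetry $\widetilde Y$ are themselves symmetries (hence subject to the same uniqueness), and you spell out why the evaluation map is a Lie algebra morphism; the paper compresses both of these into a single sentence.
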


\begin{proof}
The map ${\bf sym}_X({\mathcal V})^{<\infty}\to \mathfrak X[\mathbb C^n]^{<\infty}$, $Y\mapsto Y(x_0)$,
is a Lie algebra morphism
since the computation of the Lie bracket and the restriction to the f\/iber
$\{x_0\} \times \mathbb C^n$ are commuting processes. We have to see
that it is an isomorphism. Let~$r$ be
the degree of $Y^{(0)}$ and
\begin{gather*}
Y^{(0)}= Y_0^{(0)} + Y_1^{(0)} + \cdots + Y_r^{(0)}
\end{gather*}
the decomposition of $Y^{(0)}$ in homogeneous components. Let $Y_k$ be
the solution in ${\mathcal V}$ of the Cauchy problem
\begin{gather*}
\frac{dY_k}{dx} = \Phi'_j(A(x))Y_k, \qquad Y_k(x_0) = Y^{(0)}_k.
\end{gather*}
The existence and uniqueness of the solution guarantees that
\begin{gather*}
Y  = Y_0 +  Y_1 + \cdots + Y_r
\end{gather*}
is the only polynomial vertical  symmetry such that $Y(x_0) = Y^{(0)}$.
\end{proof}

We will now write the system $(\ref{induced2})$ in more explicit form using tensor products.
\begin{Proposition}
Let $N:= {n+m-1 \choose m}$ denote the number of monomials of degree~$m$ in~$n$ va\-riables.
We define the matrix $ {\mathcal{A}_m}:=A \otimes \operatorname{Id}_N + \operatorname{Id}_n \otimes {\bf sym}^m(A^\star) $ of size $nN$.

The system $y'=  {\mathcal{A}_m} y$ has a rational solution
\begin{gather*}
y=\big[a_{1,1}(x),\ldots, a_{N,1}(x),\ldots,a_{1,j}(x),\ldots,a_{N,j}(x),\ldots,\\
\hphantom{y=\big[}{}a_{1,n}(x),\ldots,a_{N,n}(x)\big]^T,
 \qquad a_{i,j}(x) \in \mathcal{K}
\end{gather*}
 if and only if $X$ admits the homogeneous vertical symmetry	
\begin{gather*}
	Y= \sum_{j=1}^n\left( \sum_{i=1}^N a_{i,j}(x) \mu_{i}(y_1,\ldots,y_n)\right) \frac{\partial}{\partial y_j},
\end{gather*}
 where $\mu_{i}(y_1,\ldots,y_n)$ denotes the $i$-th monomial $($for the lexicographic order$)$ of degree~$m$
 in the~$n$ variables~$y_1,\ldots,y_n$.
 \end{Proposition}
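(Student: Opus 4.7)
The plan is to rewrite the Lie--Vessiot equation~\eqref{induced2} in explicit coordinates by choosing the monomial basis of $\mathfrak X^m[\mathbb C^n]$. This space has the natural $\mathbb C$-basis $\{\mu_i(y)\,\partial/\partial y_j\}_{1\le i\le N,\,1\le j\le n}$, so it splits naturally as the tensor product of the span of the $\partial/\partial y_j$ (a copy of $\mathbb C^n$) and the span of the monomials $\mu_i$ (a copy of $\mathbb C^N$). Writing $Y = \sum_{j}\sum_{i} a_{i,j}(x)\mu_i(y)\,\partial/\partial y_j$ reads off exactly the column vector $(a_{i,j})$ arranged as in the statement, with the outer index~$j$ selecting a block of size~$N$ and the inner index~$i$ running lexicographically through the monomials of degree~$m$.

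By Theorem~\ref{t:sym_lv}, $Y$ is a symmetry of $X$ if and only if $dY/dx = \Phi'_m(A(x))(Y)$; and by Lemma~\ref{keylemma}, $\Phi'_m(A)(Y) = -[\vec v_A, Y]$. A direct computation of this bracket, writing $F_k(y) := \sum_i a_{i,k}\,\mu_i(y)$, gives
\[
-[\vec v_A, Y] \;=\; \sum_{k=1}^n \Bigl( \sum_{j=1}^n A_{kj}\,F_j(y) \;-\; \sum_{l=1}^n (Ay)_l\,\partial_l F_k(y) \Bigr) \frac{\partial}{\partial y_k}.
\]
The first bracketed term acts only on the ``vector'' index $k$ through the matrix~$A$ and treats each $F_j$ as a whole polynomial: it contributes the summand $A\otimes \operatorname{Id}_N$ to the coordinate action. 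The second term $-(Ay)\cdot\nabla$ is applied independently inside each component $F_k$; it is precisely the infinitesimal contragredient action of~$A$ on polynomials of degree~$m$ in the $y_i$, and in the monomial basis it is represented by $\mathbf{sym}^m(A^\star)$ with $A^\star=-A^T$. Hence it contributes $\operatorname{Id}_n\otimes \mathbf{sym}^m(A^\star)$.

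Adding the two contributions, the equation $dY/dx = \Phi'_m(A(x))Y$, read in the basis $\{\mu_i\,\partial/\partial y_j\}$, becomes exactly $y' = \mathcal A_m\,y$ for the coefficient vector $y = (a_{i,j})$. The asserted equivalence is then tautological: a solution with entries in $\mathcal K$ corresponds bijectively to a homogeneous vertical symmetry of degree~$m$ whose polynomial coefficients lie in $\mathcal K[y_1,\ldots,y_n]$.

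The step demanding the most care is the bookkeeping rather than any single calculation. One must fix the sign convention $A^\star=-A^T$ so that its $m$-th symmetric power really represents $-(Ay)\cdot\nabla$ in the monomial basis, and one must check that the lexicographic ordering of the $\mu_i$ is compatible with the block ordering of the coefficient vector displayed in the statement, so that the tensor factorisation on $\mathbb C^n\otimes \mathbb C^N$ genuinely places $\operatorname{Id}_n$ on the $j$-index side and $\mathbf{sym}^m(A^\star)$ on the $i$-index side.
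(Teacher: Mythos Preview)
Your argument is correct. You compute the Lie bracket $-[\vec v_A,Y]$ directly in the monomial basis of $\mathfrak X^m[\mathbb C^n]$, identify the two summands with the tensor factors $A\otimes\operatorname{Id}_N$ and $\operatorname{Id}_n\otimes\mathbf{sym}^m(A^\star)$, and invoke Theorem~\ref{t:sym_lv}. The check that $A^\star=-A^T$ makes the map $F\mapsto -(Ay)\cdot\nabla F$ coincide with $\mathbf{sym}^m(A^\star)$ on degree-$m$ polynomials is the one nontrivial point, and your verification for $m=1$ extends to general $m$ by the Leibniz rule on products of linear forms.

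The paper's proof takes a genuinely different route. Rather than computing the bracket, it interprets the situation via differential modules: writing $(\mathcal M,\partial)$ for the module attached to $y'=Ay$, it identifies a homogeneous vertical symmetry of degree $m$ with a $\partial$-equivariant map $\mathcal M^\star\to\operatorname{Sym}^m(\mathcal M^\star)$, and then uses the canonical isomorphism $\operatorname{Hom}(\mathcal M^\star,\operatorname{Sym}^m(\mathcal M^\star))\simeq\mathcal M\otimes\operatorname{Sym}^m(\mathcal M^\star)$ to read off the tensor matrix $\mathcal A_m$ at once. That argument is shorter and coordinate-free, and it explains conceptually why the dual (hence the $A^\star$) appears; your approach, by contrast, is entirely elementary, avoids the module language, and makes the sign and ordering conventions explicit, which is exactly what one would want before implementing the system $y'=\mathcal A_m y$ in practice.
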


 \begin{proof}
 Let $(\mathcal{M},\partial)$ denote the dif\/ferential module associated with $y'=Ay$.
Letting $y_1,\ldots, y_n$ denote a basis of the dual $\mathcal{M}^\star$, we see that
$X$ represents the action of $\partial$ on $\mathcal{M}^\star$
($X$ acts on the f\/irst integrals rather than on the solutions, see \cite{morales1999,MR2001, Weil1995}).
The monomials of degree $m$ in the $y_1,\ldots, y_n$ form a basis of $\operatorname{Sym}^m(\mathcal{M}^\star)$.
So, a homogeneous vertical polynomial symmetry is a~map from $\mathcal{M}^\star$ to
$\operatorname{Sym}^m(\mathcal{M}^\star)$ which furthermore commutes with $X$ (and hence with $\partial$).
Now
\begin{gather*}
\operatorname{Hom}\big(\mathcal{M}^\star, \operatorname{Sym}^m(\mathcal{M}^\star) \big)
	= (\mathcal{M}^\star)^\star \otimes \operatorname{Sym}^m(\mathcal{M}^\star)
	= \mathcal{M} \otimes \operatorname{Sym}^m(\mathcal{M}^\star),
\end{gather*}
so that
\begin{gather*}
\operatorname{Hom}_{\partial}\big(\mathcal{M}^\star, \operatorname{Sym}^m(\mathcal{M}^\star) \big)
	= \ker\big( \partial, \mathcal{M} \otimes \operatorname{Sym}^m(\mathcal{M}^\star) \big).
\end{gather*}
This shows that the coef\/f\/icients of $Y$ are exactly the rational solutions of  $y'=  {\mathcal{A}_m} y$.
 \end{proof}

\subsection{Symmetries vs Galois}\label{section6.2}

The intrinsic relation between the Galois group and the Lie algebra of symmetries
of \eqref{LE2} is made explicit by the following result.

\begin{Theorem}\label{t:Lie_Gal}
Let $Y^{(0)}$ be a polynomial vector field in $\mathbb C^n$, and $x_0$ a non-singular
point of~\eqref{LE}. There is a polynomial vertical  symmetry $Y$ of $X$ with coefficients
in~$\mathcal K$ such that $Y(x_0) = Y^{(0)}$ if and only if for each
Galoisian matrix $\sigma\in {\rm Gal}(x_0,X)$, $\sigma_*Y^{(0)} = Y^{(0)}$.
\end{Theorem}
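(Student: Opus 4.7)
The plan is to reduce the statement to the Galois correspondence (Lemma~\ref{invariant}) applied degree-by-degree to the Lie--Vessiot hierarchy described in Theorem~\ref{t:sym_lv}.

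First, I would decompose $Y^{(0)}$ into its homogeneous components $Y^{(0)}=Y^{(0)}_0+Y^{(0)}_1+\dots+Y^{(0)}_r$ and observe two facts that I shall use repeatedly. On the one hand, by Corollary~\ref{t:sym_ci}, any polynomial vertical symmetry $Y$ with $Y(x_0)=Y^{(0)}$ is uniquely determined by $Y^{(0)}$ on a simply connected neighborhood of $x_0$ and decomposes as $Y=Y_0+\dots+Y_r$, where each $Y_k$ is the unique solution of the Cauchy problem $dY_k/dx=\Phi'_k(A(x))Y_k$ with $Y_k(x_0)=Y^{(0)}_k$. On the other hand, since the ${\rm GL}(n,\mathbb{C})$-action $\Phi$ preserves the degree, $\sigma_*$ preserves the homogeneous decomposition; hence $\sigma_* Y^{(0)}=Y^{(0)}$ if and only if $\Phi_k(\sigma)Y^{(0)}_k=Y^{(0)}_k$ for every $k$.

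Next, I would observe that having coefficients in $\mathcal{K}$ is also a degree-by-degree property: $Y$ has coefficients in $\mathcal{K}$ if and only if each homogeneous component $Y_k$ does, because one can read off the coefficients of any fixed monomial $y^\alpha$ directly from the component of degree $|\alpha|$. By Theorem~\ref{t:sym_lv}, $Y_k$ viewed as a map ${\mathcal V}\to\mathfrak{X}^k[\mathbb{C}^n]$ is a solution of the Lie--Vessiot system induced by the representation $(\Phi_k,\mathfrak{X}^k[\mathbb{C}^n])$, and $Y_k$ has coefficients in $\mathcal{K}$ precisely when it is a $\mathcal{K}$-rational solution of this induced system in the sense of Section~\ref{section5}.

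Now I would invoke Lemma~\ref{invariant} in the representation $(\Phi_k,\mathfrak{X}^k[\mathbb{C}^n])$: $Y_k$ is a $\mathcal{K}$-rational solution if and only if $\Phi_k(\sigma)Y_k(x_0)=Y_k(x_0)$ for every $\sigma\in{\rm Gal}(x_0,X)$, i.e., $\sigma_* Y^{(0)}_k=Y^{(0)}_k$. Combining with the observations of the first paragraph gives both implications simultaneously. For the ``if'' direction: given $\sigma_*Y^{(0)}=Y^{(0)}$, each $Y^{(0)}_k$ is Galois-invariant; by Lemma~\ref{invariant} the Cauchy solutions $Y_k$ are $\mathcal{K}$-rational, and their sum $Y$ is the desired symmetry with coefficients in $\mathcal{K}$. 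For the ``only if'' direction: if such a $Y$ exists, each homogeneous component $Y_k$ has coefficients in $\mathcal{K}$ and is $\mathcal{K}$-rational as a solution of the corresponding induced system, hence $\Phi_k(\sigma)Y^{(0)}_k=Y^{(0)}_k$ for all $\sigma$, and summing over $k$ yields $\sigma_*Y^{(0)}=Y^{(0)}$.

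The only genuinely delicate point is the compatibility between the three levels: analytic symmetry, $\mathcal{K}$-rational solution of the Lie--Vessiot system, and Galois invariance. The first equivalence is Theorem~\ref{t:sym_lv}, the second is Lemma~\ref{invariant}; the ``degree by degree'' argument glues them together. No new computation is needed beyond checking that $\Phi_k$ preserves degree, which is immediate from the definition of the action of diffeomorphisms on vector fields.
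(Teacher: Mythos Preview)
Your proof is correct and follows essentially the same route as the paper: existence of a local symmetry with prescribed initial value via Corollary~\ref{t:sym_ci}, identification of symmetries with solutions of the induced Lie--Vessiot systems via Theorem~\ref{t:sym_lv}, and then the Galois correspondence of Lemma~\ref{invariant}. The only difference is presentational: the paper handles all homogeneous components simultaneously in one line (``here we consider all the homogeneous components simultaneously''), whereas you spell out the degree-by-degree reduction explicitly, which is arguably clearer but not a different argument.
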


\begin{proof}
It follows  from our def\/inition of Galois group.
By Theorem~\ref{t:sym_ci}, for each one of them there is a polynomial vertical~$Y$
symmetry such that $Y(x_0) = Y^{(0)}$ def\/ined in a neigbourhood of~$x_0$. By
Theorem~\ref{t:sym_lv} this is a solution of a Lie--Vessiot system induced
by $X$, here we consider all the homogeneous components simultaneously. Finally,
by Lemma~\ref{invariant}, this solution has coef\/f\/icients in~$\mathcal K$ if
and only if $A_*Y(x_0) = Y(x_0)$ for all Galoisian matrices at~$x_0$.
\end{proof}

Hence, the Galois group ${\rm Gal}(x_0,X)$ of (\ref{LE}) determines the Lie algebra
${\bf sym}^{<\infty}(\mathcal K)$ of polynomial vertical  $\mathcal K$-rational symmetries
in the following sense. The Lie algebra ${\bf sym}^{<\infty}(\mathcal K)$ is isomorphic
to $\mathfrak X[\mathbb C^n]^{{\rm Gal}(x_0,X)}$, the Lie algebra of polynomial vector
f\/ields f\/ixed by the action of ${\rm Gal}(x_0,X)$ by linear transformations in $\mathbb C^n$.
However, we do not have a reciprocal: in general, the Galois group is contained
in the stabilizer of the Lie algebra of $\mathcal K$-rational symmetries.

It is possible also to dualize the situation and to consider the Galois group itself as symmetries of the inf\/initesimal symmetries of
equation~(\ref{LE}) as follows. Given a polynomial vector f\/ield $Y$ in $\mathbb C^n$ a \emph{linear symmetry} of $Y$ is a non-degenerated matrix $\sigma$ such that $\sigma_* Y = Y$.
Here, $\sigma$ stands for the transformation
\begin{gather*}
y = (y_1,\ldots,y_n) \to \sigma y = \left(\sum_{j=1}^n \sigma_{1j}y_j,
\ldots, \sum_{j=1}^n \sigma_{nj}y_j\right).
\end{gather*}
If the expression in coordinates of~$Y$ is
\begin{gather*}
Y = \sum P_i(y)\frac{\partial}{\partial y_i},
\end{gather*}
then the matrix $\sigma$ is linear  symmetry of $Y$ if and only if
it satisf\/ies the equations
\begin{gather}\label{sym_pol}
P_i(\sigma y) = \sum_{j=1}^n\sigma_{ij}P_j(y).
\end{gather}
If we are looking for the linear  symmetries of a homogeneous polynomial
vector f\/ield of degree~$m$, it yields a total of $n \times {n+m-1 \choose m}$ equations.
Thus, for generic polynomial vector f\/ields of high degree the group of
linear  symmetries reduces to the identity.

\begin{Example}
Let us compute the symmetries of the quadratic vector f\/ield
\begin{gather*}Y = y_2^2\frac{\partial}{\partial y_1}\end{gather*}
in $\mathbb C^2$. Equations \eqref{sym_pol} for this particular case yield
\begin{gather*}\sigma_{21}^2y_1^2 + 2\sigma_{21}\sigma_{22}y_1y_2 + \sigma_{22}^2y_2^2 = \sigma_{11}y_2^2,
\qquad
0 = \sigma_{21}y_2^2,
\end{gather*}
equating each coef\/f\/icient, we obtain
\begin{gather*}
\sigma_{11} = \sigma_{22}^2, \qquad \sigma_{21} = 0.
\end{gather*}
Thus, the group of  linear symmetries is
\begin{gather}\label{g:quadratic}
\left\{
\left( \begin{matrix}
   \lambda^2 & \mu \\
   0 & \lambda
  \end{matrix} \right)  \colon \  \lambda\in \mathbb C^*,\   \mu \in \mathbb C
\right\}.
\end{gather}
\end{Example}

The Lie--Vessiot induced system for polynomial vertical  symmetries of arbitrary
degree~$r$ is
\begin{gather*}\frac{dY}{dx} = -[\vec v_A(x),Y].
\end{gather*}
We can consider all the homogeneous components simultaneously. Theorem~\ref{t:Lie_Gal} can be restated in the following terms:

\begin{Corollary}
Let $Y^{(0)}$ be a polynomial vector field in $\mathbb C^n$, and $x_0$ a non-singular point of equation~\eqref{LE}.
The necessary and sufficient
condition for the existence of a polynomial vertical  $\mathcal K$-rational
symmetry $Y$ of $X$ such that $Y(x_0) = Y^{(0)}$ is that the Galois group
${\rm Gal}(x_0,X)$ is contained in the group of linear symmetries
of~$Y_0$.
\end{Corollary}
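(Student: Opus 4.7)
The plan is to observe that this corollary is a direct reformulation of Theorem~\ref{t:Lie_Gal} in the language of linear symmetries that was just introduced above. There is essentially no new content to establish: the hypothesis on $\mathrm{Gal}(x_0,X)$ in the statement is simply a repackaging of the invariance condition $\sigma_* Y^{(0)} = Y^{(0)}$ for all Galoisian $\sigma$.

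Concretely, I would proceed in three short steps. First, I would unfold the definition given in the paragraph preceding the corollary: the group of linear symmetries of $Y^{(0)}$ is, by definition, the set $\{\sigma \in \mathrm{GL}(n,\mathbb C) \colon \sigma_* Y^{(0)} = Y^{(0)}\}$. Second, I would translate the containment $\mathrm{Gal}(x_0,X) \subseteq \{\sigma \colon \sigma_* Y^{(0)} = Y^{(0)}\}$ into its logical equivalent, namely that every Galoisian matrix $\sigma \in \mathrm{Gal}(x_0,X)$ satisfies $\sigma_* Y^{(0)} = Y^{(0)}$. Third, I would invoke Theorem~\ref{t:Lie_Gal} verbatim: that very condition is shown there to be equivalent to the existence of a polynomial vertical $\mathcal{K}$-rational symmetry $Y$ of $X$ with $Y(x_0) = Y^{(0)}$.

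Since each of these three steps is an equivalence, chaining them yields the biconditional claimed in the corollary. I would also remark on the minor notational point that the ``$Y_0$'' appearing at the end of the statement refers to the prescribed initial value $Y^{(0)}$ (the value of the symmetry at $x_0$), consistent with the notation of Theorem~\ref{t:Lie_Gal}.

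There is no real obstacle here; the only subtlety to guard against is ensuring that the ``linear symmetry'' condition really matches the pushforward condition appearing in Theorem~\ref{t:Lie_Gal}. But this is immediate from the coordinate equations~\eqref{sym_pol}, which are nothing else than the defining equations of $\sigma_* Y^{(0)} = Y^{(0)}$ for a polynomial vector field $Y^{(0)}$. Once this identification is made, the corollary is a one-line consequence of the theorem.
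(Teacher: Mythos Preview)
Your proposal is correct and matches the paper's approach exactly: the paper presents this corollary explicitly as a restatement of Theorem~\ref{t:Lie_Gal} (introducing it with ``Theorem~\ref{t:Lie_Gal} can be restated in the following terms'') and gives no separate proof, since the group of linear symmetries of $Y^{(0)}$ is by definition $\{\sigma\in{\rm GL}(n,\mathbb C)\colon \sigma_*Y^{(0)}=Y^{(0)}\}$. Your observation about the notational slip $Y_0$ versus $Y^{(0)}$ is also accurate.
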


\begin{Example}
Let us consider the system
\begin{gather*}
\frac{dy_1}{dx}  =  2a(x)y_1 + b(x)y_2, \qquad
\frac{dy_2}{dx}  =  a(x)y_2,
\end{gather*}
where $a(x)$, $b(x)$, are arbitrary functions in~$\mathcal K$.
A~direct computation of the Lie bracket says that $Y = y_1^2\frac{\partial}{\partial y_2}$
is a symmetry, and thus the Galois group of the equation (for any function f\/ield~$\mathcal K$)
is contained in the group~\eqref{g:quadratic}.
\end{Example}

Let us check now how the polynomial vertical  symmetries of  degrees
 zero and one look like, and what kind information about the Galois group they carry.

\subsubsection{Symmetries of degree zero}\label{section6.2.1}

The canonical isomorphism $\mathfrak X^0[\mathbb C^n] \simeq \mathbb C^n$ stated
in Section~\ref{pol_vf} tell us that the linear representation $(\Phi_0, \mathfrak X^0[\mathbb C^n])$ is just an isomorphism. In particular, if
\begin{gather*}
Y = \sum_{i=1}^n f_i(x)\frac{\partial}{\partial y_i}
\end{gather*}
then
\begin{gather*}
\Phi'(A(x))Y = \sum_{i,j=1}^n a_{ij}(x)f_j(x)\frac{\partial}{\partial y_i},
\end{gather*}
and thus we have the following result:

\begin{Proposition}
 A $n$-tuple of functions $y = (\phi_1(x),\ldots,\phi_n(x))$ is a solution of~\eqref{LE}
if and only if $Y = \sum\limits_{i=1}^n\phi_i(x)\frac{\partial}{\partial y_i}$ is a symmetry
of~\eqref{LE}.
\end{Proposition}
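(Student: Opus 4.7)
The plan is to invoke Theorem \ref{t:sym_lv} in the case $r=0$, since everything has essentially already been set up in the paragraph preceding the proposition.

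First, I would recall that by the canonical isomorphism $\mathfrak X^0[\mathbb C^n]\simeq \mathbb C^n$ of Section~\ref{pol_vf}, a degree-zero vertical vector field $Y=\sum_{i=1}^n \phi_i(x)\frac{\partial}{\partial y_i}$ corresponds to the $n$-tuple $(\phi_1(x),\ldots,\phi_n(x))$. By Theorem~\ref{t:sym_lv}, $Y$ is a vertical symmetry of $X$ if and only if, viewed as a map from $\mathcal V$ into $\mathfrak X^0[\mathbb C^n]$, it solves the Lie--Vessiot system
\begin{equation*}
\frac{dY}{dx} = \Phi'_0(A(x))Y.
\end{equation*}

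Second, I would identify this induced system with the original equation \eqref{LE}. The formula for $\Phi'_0(A(x))$ acting on a degree-zero vertical field has just been computed explicitly right above the proposition:
\begin{equation*}
\Phi'_0(A(x))Y = \sum_{i,j=1}^n a_{ij}(x)\phi_j(x)\frac{\partial}{\partial y_i}.
\end{equation*}
Comparing coefficient by coefficient, the Lie--Vessiot equation $dY/dx = \Phi'_0(A(x))Y$ becomes $\phi_i'(x)=\sum_{j=1}^n a_{ij}(x)\phi_j(x)$ for each $i$, which is literally the system \eqref{LE} written out componentwise.

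No obstacle here: the proof is just an unwinding of definitions once Theorem~\ref{t:sym_lv} and the identification $\mathfrak X^0[\mathbb C^n]\simeq\mathbb C^n$ are in place. If one preferred a self-contained argument avoiding the Theorem, one can directly compute $[X,Y] = \sum_l\bigl(\phi_l'(x) - \sum_j a_{lj}(x)\phi_j(x)\bigr)\frac{\partial}{\partial y_l}$ and observe that this vertical vector field vanishes exactly when $(\phi_1,\ldots,\phi_n)$ satisfies \eqref{LE}; since $Y$ is vertical, its being a symmetry is equivalent to $[X,Y]=0$, as was already observed in Section~\ref{section2}.
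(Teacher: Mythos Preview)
Your proof is correct and matches the paper's approach exactly: the paper establishes the proposition by the computation of $\Phi'_0(A(x))Y$ in the paragraph immediately preceding it, combined (implicitly) with Theorem~\ref{t:sym_lv}, which is precisely what you do. Your optional self-contained Lie-bracket computation is a valid alternative and not present in the paper, but the primary argument is identical.
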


This proposition can be understood as an inf\/initesimal version of the superposition
principle. If $y(x)$ and $\phi(x)$ are solutions of~\eqref{LE}, then for all $\varepsilon$,
$y(x) + \varepsilon \phi(x)$ is also a solution. For a~f\/ixed~$\phi(x)$ and a~general~$y(x)$, the derivative of this monoparametric family of solutions with respect to
$\epsilon$ is a vertical vector f\/ield, namely, the symmetry~$Y$.

\begin{Remark}\label{Alambert} Given a symmetry of degree zero of~\eqref{LE} (i.e., a~solution),  we can reduce the  system  of $n$ dif\/ferential equations~\eqref{LE} to a  system of $n-1$  dif\/ferential equations, by means of a suitable
gauge transformation. This simple observation can be viewed as a generalization of the classical result of d'Alembert: the order of a linear dif\/ferential equation can be reduced by one when a particular solution is known.
\end{Remark}

\subsubsection{Linear symmetries}\label{section6.2.2}

Homogeneous polynomial vertical  symmetries of degree one are called \emph{linear symmetries}.
The homogeneous vector f\/ield $\vec h = \sum\limits_{i=1}^n y_i\frac{\partial}{\partial y_i}$
and its multiples gives us a trivial monoparametric family linear vertical
symmetries for any system of dif\/ferential equations.
The canonical isomorphism $\mathfrak X^1[\mathbb C^n] \simeq \mathfrak{gl}(n, \mathbb C)$ stated
in Section~\ref{pol_vf} tell us the that Lie--Vessiot system induced by the
representation  $(\Phi_1, \mathfrak X^1[\mathbb C^n])$ can be seen as a matrix equation.
If we write
\begin{gather*}
Y = \sum_{i,j=1}^n b_{ij} y_j\frac{\partial}{\partial y_i},
\end{gather*}
where $B = (b_{ij})$ stands for a $n\times n$ matrix of undetermined functions,
the induced system is written as
\begin{gather}\label{lax}
\frac{dB}{dx} = [A(x),B].
\end{gather}
This is the equation of isospectral deformations induced by $A(x)$ and has been exhaustively
studied. The set of rational solutions of $(\ref{lax})$ is called the \emph{eigenring}, see~\cite{Barkatou2007} for an extensive study of its properties, notably to decompose linear dif\/ferential systems.
If~$B(x)$ is a solution of~\eqref{lax} it is well known that the Jordan canonical
form of $B(x)$ does not depend on the point~$x$. Thus, given a linear symmetry $Y$
with matrix~$B(x)$, we will classify it according to its Jordan canonical form:
\begin{itemize}\itemsep=0pt
\item[(a)] If $B(x)$ has at least two dif\/ferent eigenvalues we will say that $Y$ is
a \emph{decomposer} symmetry.
\item[(b)] If all the eigenvalues of $B(x)$ are dif\/ferent we will say that
$Y$ is a \emph{complete decomposer} symmetry.
\item[(b)] If the eigenspaces of $B(x)$ are one-dimensional
we say that $Y$ is a \emph{solver} symmetry. That means that its
Jordan canonical form does not contain any block of the form
\begin{gather*}\left(  \begin{matrix}
   \lambda & 0 \\
   0 & \lambda
  \end{matrix}  \right).\end{gather*}
\end{itemize}

\looseness=-1
 The following theorem is very close to some of the results of C.~Jensen
in~\cite[Section~9]{Jensen2005}, on integration by quadratures, and results of M.A.~Barkatou in~\cite{Barkatou2007} on decomposition~-- although we obtain it  by dif\/ferent means and relate it with the linear symetries and the Galois group of the
system.

\begin{Theorem}\label{t:decomposer}
The following are equivalent:
\begin{itemize}\itemsep=0pt
\item[$(a)$] There is a decomposer symmetry $Y\in {\bf sym}(\mathcal K)$ with
$k$ different eigenvalues of multiplicity $r_1,\ldots,r_k$.
\item[$(b)$] The Galois group ${\rm Gal}(x_0,X)$ is conjugated to
a sugbroup of the group of block-diagonal matrices
${\rm GL}(r_1,\mathbb C)\times\cdots\times{\rm GL}(r_k,\mathbb C)$.
\end{itemize}
\end{Theorem}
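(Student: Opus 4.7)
The plan is to reduce both implications to Theorem~\ref{t:Lie_Gal} applied in degree one, using the dictionary between linear vertical vector fields and matrices established in Section~\ref{pol_vf}. Throughout, if $Y$ is a linear symmetry with matrix $B(x)$, I will use the identification $Y = \vec v_{B(x)}$ and the fact that a diffeomorphism $\sigma \in \mathrm{GL}(n,\mathbb{C})$ acts on such a field by $\sigma_\ast \vec v_{B_0} = \vec v_{\sigma B_0 \sigma^{-1}}$. Under this identification, the condition $\sigma_\ast Y(x_0) = Y(x_0)$ from Theorem~\ref{t:Lie_Gal} becomes $\sigma B(x_0) \sigma^{-1} = B(x_0)$, i.e.\ $\sigma$ commutes with $B(x_0)$.

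For the implication $(a)\Rightarrow(b)$, I would start from a decomposer symmetry $Y \in \mathbf{sym}(\mathcal K)$ of degree one with matrix $B(x)$ satisfying the Lax equation~\eqref{lax}. Set $B_0 := B(x_0)$. By Theorem~\ref{t:Lie_Gal}, every $\sigma \in \mathrm{Gal}(x_0,X)$ commutes with $B_0$; hence $\sigma$ preserves each generalized eigenspace of $B_0$. Since the Jordan form of $B(x)$ is independent of $x$ (a standard consequence of~\eqref{lax}), the eigenvalues $\lambda_1,\ldots,\lambda_k$ of $B_0$ are distinct and their generalized eigenspaces $V_1,\ldots,V_k$ have dimensions $r_1,\ldots,r_k$. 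Choosing a basis adapted to the decomposition $\mathbb C^n = V_1 \oplus \cdots \oplus V_k$ conjugates $\mathrm{Gal}(x_0,X)$ into $\mathrm{GL}(r_1,\mathbb C)\times\cdots\times\mathrm{GL}(r_k,\mathbb C)$.

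For the converse $(b)\Rightarrow(a)$, suppose after conjugation that $\mathrm{Gal}(x_0,X)$ sits inside the block-diagonal group associated to a decomposition $\mathbb C^n = V_1 \oplus \cdots \oplus V_k$ with $\dim V_i = r_i$. Choose $k$ distinct scalars $\lambda_1,\ldots,\lambda_k \in \mathbb C$ and define $B_0 \in \mathfrak{gl}(n,\mathbb C)$ to act as $\lambda_i \mathrm{Id}$ on $V_i$. Every block-diagonal $\sigma$ commutes with $B_0$, so $\sigma_\ast \vec v_{B_0} = \vec v_{B_0}$ for all $\sigma \in \mathrm{Gal}(x_0,X)$. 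Theorem~\ref{t:Lie_Gal} then yields a polynomial vertical $\mathcal K$-rational symmetry $Y$ with $Y(x_0) = \vec v_{B_0}$. By Corollary~\ref{t:sym_ci}, $Y$ is homogeneous of degree one; its matrix $B(x)$ solves~\eqref{lax} with $B(x_0) = B_0$, so its Jordan form is the constant diagonal form of $B_0$, making $Y$ a decomposer symmetry with the required multiplicities.

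The only delicate point is the linear-algebra step in $(a)\Rightarrow(b)$: from $\sigma B_0 = B_0 \sigma$ one must recover block-diagonality in the correct basis. This is routine once one observes that commuting with $B_0$ forces preservation of each generalized eigenspace and that the block sizes $r_i$ match those of the Jordan form. In $(b)\Rightarrow(a)$ the main subtlety is simply that the chosen $B_0$ must have exactly $k$ distinct eigenvalues with the right multiplicities, which is ensured by picking distinct scalars. Everything else is a direct reading of Theorem~\ref{t:Lie_Gal} and the constancy of the Jordan type of solutions of~\eqref{lax}.
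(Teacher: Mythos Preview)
Your proof is correct and follows essentially the same route as the paper. In both directions the paper does exactly what you do: for $(a)\Rightarrow(b)$ it observes that $\sigma_*Y(x_0)=Y(x_0)$ forces $\sigma B_0\sigma^{-1}=B_0$ and hence preservation of the generalized eigenspaces, and for $(b)\Rightarrow(a)$ it builds the initial value $Y^{(0)}=\sum_i\mu_i\vec h_i$ (your $\vec v_{B_0}$ with $B_0|_{V_i}=\lambda_i\mathrm{Id}$) and invokes the Galois-invariance criterion; the only cosmetic difference is that the paper cites Lemma~\ref{invariant} directly while you cite its repackaging as Theorem~\ref{t:Lie_Gal}.
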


\begin{proof}
(a)~$\Longrightarrow$~(b). Let $x_0$ be a non-singular point and $B$ the matrix def\/ined by
$Y(x_0)$, with eigenvalues $\lambda_1,\ldots,\lambda_k$ of multiplicities $r_1,\ldots,r_k$.
Let us consider the decomposition
\begin{gather}\label{Cdecomp}
\mathbb C^n = E_1\oplus \cdots \oplus E_k,
\end{gather}
where the spaces $E_i = \ker(B-\lambda_i\operatorname{Id})$ are the generalized eigenspaces of~$B$.
The group
\begin{gather*}
G = \{\sigma\in {\rm GL}(n,\mathbb C)  \colon \sigma(E_i) = E_i \ \mbox{for all} \ i=1,\ldots,k\}
\end{gather*}
is clearly conjugated to the group of block-diagonal matrices. Let us see
that all Galoisian matrices at~$x_0$ are in~$G$. If~$\sigma$ is Galoisian then,
$\sigma_*(Y(x_0)) = Y(x_0)$, but that means $\sigma B \sigma^{-1} = B$, so $\sigma$
conjugates $B$ with itself, and thus it sends generalized eigenspaces of~$B$ to themselves.

(b)~$\Longrightarrow$~(a). Let us assume that ${\rm Gal}(x_0,X)$ is conjugated to
a subgroup of the group of block-diagonal matrices. Then, we have a decomposition
of $\mathbb C^n$ in subspaces as in formula~\eqref{Cdecomp}, such that for all
$\sigma\in {\rm Gal}(x_0,X)$, and index $i=1,\ldots,k$, $\sigma(E_i) = E_i$. Let
us consider the following linear vector f\/ields $\vec h_i$ in $\mathbb C^n$
for $i=1,\ldots,k$ def\/ined by properties:
\begin{gather*}
\vec h_i|_{E_i} = \vec h,
\quad\mbox{where} \ \vec h\mbox{ stands for the Euler homogeneous vector f\/ield},\\
\vec h_i|_{E_j} = 0, \quad \mbox{if} \ i\neq j.
\end{gather*}
Let us consider $\mu_1,\ldots,\mu_k$ dif\/ferent complex numbers and def\/ine
\begin{gather*}
Y^{(0)} = \sum_{i=1}^k \mu_i\vec h_i.
\end{gather*}
 $Y^{(0)}$ is stabilized by any Galoisian matrix and then, by
Lemma \ref{invariant}, there is $\mathcal K$-rational symmetry~$Y$ whose value
at~$x_0$ is~$Y^{(0)}$. This symmet\-ry~$Y$ is the decomposer symmetry of the
statement.
\end{proof}

{\samepage
\begin{Corollary}\label{c:complete}
The following are equivalent:
\begin{itemize}\itemsep=0pt
\item[$(a)$] There is a complete-decomposer linear symmetry in ${\bf sym}^1_X(\mathcal K)$.
\item[$(b)$] The Galois group ${\rm Gal}(x_0,X)$ is conjugated to subgroup of
the group of diagonal matrices $(\mathbb C^*)^n\subset {\rm GL}(n,\mathbb C)$.
\end{itemize}
\end{Corollary}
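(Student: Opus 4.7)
The plan is to deduce this corollary as a direct specialization of Theorem~\ref{t:decomposer} to the case $k=n$ with all multiplicities $r_i=1$. The key identifications are: a complete-decomposer linear symmetry is precisely a decomposer symmetry with $n$ distinct eigenvalues, and a block-diagonal group $\mathrm{GL}(1,\mathbb C)\times\cdots\times\mathrm{GL}(1,\mathbb C)$ ($n$ factors) is exactly the diagonal torus $(\mathbb C^*)^n\subset\mathrm{GL}(n,\mathbb C)$. So the proof should really be two short invocations of Theorem~\ref{t:decomposer} on either side.

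For $(a)\Rightarrow(b)$: assume $Y\in{\bf sym}^1_X(\mathcal K)$ is a complete decomposer. By definition, the matrix $B$ attached to $Y(x_0)$ has $n$ pairwise distinct eigenvalues $\lambda_1,\dots,\lambda_n$, each of multiplicity $r_i=1$. Applying Theorem~\ref{t:decomposer} with $k=n$ and $r_1=\cdots=r_n=1$ yields that $\mathrm{Gal}(x_0,X)$ is conjugated to a subgroup of $\mathrm{GL}(1,\mathbb C)^n=(\mathbb C^*)^n$.

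For $(b)\Rightarrow(a)$: assume $\mathrm{Gal}(x_0,X)$ is conjugated to a subgroup of the diagonal torus. Then there is a Galois-stable decomposition $\mathbb C^n=E_1\oplus\cdots\oplus E_n$ into $n$ lines. I would now mimic the second half of the proof of Theorem~\ref{t:decomposer}: pick pairwise distinct complex numbers $\mu_1,\dots,\mu_n$, and let $\vec h_i$ be the linear vector field that acts as the Euler field on $E_i$ and vanishes on $E_j$ for $j\ne i$. Set
\begin{gather*}
Y^{(0)}=\sum_{i=1}^n \mu_i\,\vec h_i.
\end{gather*}
This $Y^{(0)}$ is fixed by every $\sigma\in\mathrm{Gal}(x_0,X)$ (since each $E_i$ is stable and $\vec h$ is intrinsic to a line), so by Theorem~\ref{t:Lie_Gal} it lifts to a $\mathcal K$-rational polynomial vertical symmetry $Y$ with $Y(x_0)=Y^{(0)}$. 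The matrix of $Y^{(0)}$, in a basis adapted to $E_1\oplus\cdots\oplus E_n$, is $\mathrm{diag}(\mu_1,\dots,\mu_n)$; its eigenvalues are pairwise distinct by the choice of $\mu_i$, so $Y$ is a complete decomposer in ${\bf sym}^1_X(\mathcal K)$.

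There is essentially no real obstacle here, since both implications are built into Theorem~\ref{t:decomposer} and its proof; the only care needed is to verify that the words match, namely that ``complete decomposer'' (all eigenvalues distinct) corresponds to the extreme case $k=n$, $r_i=1$ of ``decomposer'', and that the block-diagonal group in this extreme case is exactly the diagonal torus. If desired, one could state the argument even more tersely as: apply Theorem~\ref{t:decomposer} with $k=n$ and $r_i=1$ for all $i$.
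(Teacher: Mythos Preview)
Your proposal is correct and follows exactly the paper's approach: the paper's proof is the single sentence ``The statement is the particular case of Theorem~\ref{t:decomposer} in which all the eigenvalues are simple,'' and your argument is just an explicit unpacking of that observation. Your final remark that one could simply invoke Theorem~\ref{t:decomposer} with $k=n$ and $r_i=1$ is precisely what the paper does.
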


}

\begin{proof} The statement is the particular case
of Theorem~\ref{t:decomposer} in which all the eigenvalues are simple.
\end{proof}

\begin{Remark}\label{abe-sym} The existence of a complete-decomposer linear symmetry
implies the existence of a $n$-dimensional abelian Lie algebra
of symmetries. Let us consider a complete-decomposer linear symmetry
$Y$ and $x_0$ a non-singular point. Let $B$ be the matrix of $Y(x_0)$,
and $v_1,\ldots,v_n$ be a basis of eigenvector of $B$. As before, we def\/ine
vector f\/ields:
\begin{gather*}
\vec h_i(v_i) = \vec h(v_i),
\quad\mbox{where} \ \vec h \ \mbox{stands for the homogeneous vector f\/ield},\\
\vec h_i(v_j) = 0, \quad\mbox{if} \ i\neq j.
\end{gather*}
It is easy to check that the matrices of the vector f\/ields $\vec h_i$ have common
eigenvectors and then $[\vec h_i,\vec h_j]= 0$.
For all Galoisian matrix $\sigma$ at $x_0$ we have $\sigma_*(\vec h_i) = \vec h_i$, and
thus by Lemma~\ref{invariant} there are linear vertical  $\mathcal K$-rational symmetries
$\vec H_1,\ldots,\vec H_n$ such that $\vec H_i(x_0) = \vec h_i$. They form
a~$n$-dimensional abelian Lie algebra.
\end{Remark}

\begin{Theorem}\label{t:solver}
If there is a solver symmetry $Y\in {\bf sym}(\mathcal K)$ then the Galois group
${\rm Gal}(x_0,X)$ is conjugated to a subgroup of the triangular group.
\end{Theorem}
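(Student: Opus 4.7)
The plan is to follow the blueprint of the proof of Theorem~\ref{t:decomposer}. Let $x_0$ be a non-singular point and let $B \in \mathfrak{gl}(n,\mathbb{C})$ be the matrix associated, via the canonical (anti-)isomorphism $\mathfrak{X}^1[\mathbb{C}^n] \simeq \mathfrak{gl}(n,\mathbb{C})$ of Section~\ref{pol_vf}, to the value $Y(x_0) \in \mathfrak{X}^1[\mathbb{C}^n]$. By Theorem~\ref{t:Lie_Gal}, every Galoisian matrix $\sigma \in \mathrm{Gal}(x_0,X)$ satisfies $\sigma_*Y(x_0) = Y(x_0)$, which translates to the matrix identity $\sigma B \sigma^{-1} = B$. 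Hence $\mathrm{Gal}(x_0,X)$ is contained in the centralizer of $B$ inside $\mathrm{GL}(n,\mathbb{C})$, and it suffices to show that this centralizer is conjugated to a subgroup of the upper triangular group.

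The key input is a standard piece of linear algebra: since $Y$ is a solver symmetry, every eigenspace of $B$ is one-dimensional, so each eigenvalue has geometric multiplicity one and hence exactly one Jordan block. Equivalently, $B$ is \emph{non-derogatory}, i.e., its minimal polynomial equals its characteristic polynomial, and for such $B$ one has $C_{M_n(\mathbb{C})}(B) = \mathbb{C}[B]$. I would then use this to produce the triangular basis explicitly. Decompose $\mathbb{C}^n = E_1 \oplus \cdots \oplus E_k$ into generalized eigenspaces with distinct eigenvalues $\lambda_1,\ldots,\lambda_k$. Any matrix commuting with $B$ preserves this decomposition, so on each $E_i$ the restriction $B|_{E_i} = \lambda_i\mathrm{Id} + N_i$ is a single Jordan block, and its centralizer (in $\mathrm{End}(E_i)$) is $\mathbb{C}[N_i]$, which in the Jordan basis of $E_i$ is precisely the algebra of upper triangular Toeplitz matrices. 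Concatenating the Jordan bases of the $E_i$'s yields a basis of $\mathbb{C}^n$ in which every element of $C_{\mathrm{GL}(n,\mathbb{C})}(B)$, and therefore every $\sigma \in \mathrm{Gal}(x_0,X)$, is upper triangular.

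I do not expect serious obstacles: once the solver hypothesis is correctly interpreted as non-derogatory, the rest is the computation of the centralizer of a non-derogatory matrix, a classical fact. The only subtle point is to be careful that the converse statement, which would require building symmetries out of every triangular Galois group (as was done in the reverse direction of Theorem~\ref{t:decomposer}), is \emph{not} claimed here: a triangular group is not in general the stabilizer of a non-derogatory linear vector field, so only one implication holds and this is precisely what the theorem asserts.
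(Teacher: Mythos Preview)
Your proof is correct and follows the same overall architecture as the paper's: reduce to the fact that $\mathrm{Gal}(x_0,X)$ lies in the centralizer of $B$ (the matrix of $Y(x_0)$), then show that this centralizer is triangularizable. The difference is in the linear-algebra step. The paper argues more directly: on each generalized eigenspace $E_i$ the filtration $0\subset\ker(B-\lambda_i\mathrm{Id})\subset\ker(B-\lambda_i\mathrm{Id})^2\subset\cdots$ is a complete flag (precisely because the solver hypothesis forces one Jordan block per eigenvalue), and any matrix commuting with $B$ preserves every term of this filtration, hence is upper triangular in an adapted basis; the general case is handled by first block-diagonalizing via Theorem~\ref{t:decomposer}. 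You instead invoke the classical characterization of the centralizer of a non-derogatory matrix as $\mathbb{C}[B]$, which yields the stronger conclusion that the centralizer is upper-triangular \emph{Toeplitz} on each block. Both routes are valid; the paper's is marginally more self-contained, while yours gives a sharper description of where the Galois group must sit and makes the role of the solver hypothesis (non-derogatory $\Leftrightarrow$ minimal polynomial equals characteristic polynomial) more explicit.
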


\begin{proof}
Let us consider f\/irst  the case in which $B$, the matrix of $Y(x_0)$, has only one
eigen\-va\-lue~$\lambda$ of multiplicity~$n$. For each Galoisian matrix~$\sigma$ at~$x_0$ we have $\sigma B\sigma^{-1} = B$.
We have a chain of subspaces
\begin{gather*}0 \subset \ker(B - \lambda \operatorname{Id}) \subset \ker(B - \lambda \operatorname{Id})^2 \subset
\cdots \subset \ker(B - \lambda \operatorname{Id})^{n-1} \subset {\mathbb C}^n.
\end{gather*}
In general, $\sigma\left(\ker(B - \lambda \operatorname{Id})^{j}  \right) =
\ker(\sigma B \sigma^{-1} - \lambda \operatorname{Id})^{j}$
and thus Galoisian matrices respect the chain of subspaces. In other words, they
are triangular matrices in some suitable basis. For the general case, with
dif\/ferent eigenvalues, we f\/irst consider the decomposition of the group
by block-diagonal matrices given in Theorem~\ref{t:decomposer}, and then
we apply the above argument.
\end{proof}

The results of this subsection supported Lie's idea that Lie symmetries are useful for
the integrability of the dif\/ferential equation~(\ref{LE}) by quadratures or at least its partial integrability or reduction:
 \begin{itemize}\itemsep=0pt
 \item [(1)] By Remark~\ref{Alambert}, the existence of a symmetry of order zero implies a reduction of the order.
 \item[(2)] The decomposition in block-diagonal form of the Galois group implies that, by means of
 a gauge transformation, we can transform  the equation~(\ref{LE}) in a direct sum of linear dif\/ferential equations (Kolchin or Lie--Kolchin reduction, see~\cite{BM2012,PutSinger2003}). By Theorem~\ref{t:decomposer}, we fall in this case for a decomposer symmetry.
 \item[(3)] By Remark~\ref{abe-sym} and Theorem~\ref{t:solver}, the existence of either a~complete-decomposer or a~solver symmetry implies the solvability of the equation by Liouvillian functions.
 \end{itemize}

 Also it is not dif\/f\/icult  to obtain some results for the Hamiltonian symmetries in the  symplectic case, i.e., for non-autonomous linear Hamiltonian systems. In some sense, this approach would shed light on the references~\cite{morales1999, MR2001} from the Lie point of view.

\appendix

\section{Symmetries of higher-order equations vs f\/irst-order systems}\label{Appendix_A}

  There are two dif\/ferent ways to present the theory of linear dif\/ferential equations.
The f\/irst one deals with a single higher-order linear dif\/ferential equation:
\begin{gather}\label{e_operator}
\frac{d^n y}{dx^n} + a_{n-1}(x)\frac{d^{n-1}y}{dx^{n-1}} + \dots + a_0(x)y = 0.
\end{gather}
The second one deals with a system of f\/irst-order linear dif\/ferential equations
\begin{gather}\label{c_system}
\frac{d}{dx} \left(
\begin{matrix} y_0 \\ y_1 \\ \vdots \\ y_{n-1} \end{matrix}
\right) = \left(
\begin{matrix}
0      & 1 & 0      & \ldots & 0 \\
0      & 0 & 1      & \ldots & 0 \\
\vdots & \vdots   & \ddots &  \ddots     & \vdots \\
- a_0(x)  & - a_1(x) & \ldots & \ldots & -a_{n-1}(x)
\end{matrix}
\right)
\left(
\begin{matrix} y_0 \\ y_1 \\ \vdots \\ y_{n-1} \end{matrix}
\right).
\end{gather}

The system \eqref{c_system} is called the companion system of~\eqref{e_operator}.
The variable~$y_i$ represents the $i$-th derivative of the function~$y$.
In this paper, we have considered systems instead of higher-order equations.
There is a pragmatic reason: although the dif\/ferential Galois theories for equations~\eqref{e_operator} and~\eqref{c_system} are the same (see, for instance \cite[Section~2.1]{PutSinger2003}),
their Lie sym\-met\-ry theories are not.
Higher-order equations have less symmetries than systems of f\/irst-order dif\/ferential equations.

By def\/inition, an \emph{infinitesimal point symmetry} of
\eqref{e_operator} is a vector f\/ield
\begin{gather*}
Y = \xi(x,y)\frac{\partial}{\partial x} + \eta(x,y)\frac{\partial}{\partial y}
\end{gather*}
in the plane~$x$,~$y$ whose f\/low maps solutions to solutions.
Let us consider the vector f\/ield
\begin{gather*}
X = \frac{\partial}{\partial x} + y_1\frac{\partial}{\partial y_0} + \dots - (a_0(x)y_0 + \dots + a_{n-1}(x)y_{n-1})
\frac{\partial}{\partial y_{n-1}}.
\end{gather*}
The vector f\/ield $Y$ extends to a
unique vector f\/ield $\tilde Y$ in the jet space of coordinates $x,y_0$, $\ldots,y_{n-1}$
\begin{gather*}
\tilde Y = \xi(x,y)\frac{\partial}{\partial x} + \eta(x,y_0)\frac{\partial}{\partial y_0} +
\eta_1(x,y_0,y_1)\frac{\partial}{\partial} + \cdots +\eta_{n-1}(x,y_0,\ldots,y_{n-1})\frac{\partial}{\partial y_{n-1}},
\end{gather*}
satisfying the conditions (see \cite[Section~2.3]{Olver})
\begin{gather*}
[\tilde Y,X] \in (X),\\
 {\rm Lie}_{\tilde Y}(dy_0-y_1dx, \ldots, dy_{n-2}-y_{n-1}dx) \subseteq (dy_0-y_1dx, \ldots, dy_{n-2}-y_{n-1}dx).
\end{gather*}
 It is known that the Lie algebra of point symmetries (in some open subset) of a linear dif\/ferential equation of order $\geq 2$ is f\/inite-dimensional. The above conditions allow us to generalize the idea of inf\/initesimal point symmetry.
 Any vector f\/ield in the jet space of coordinates
$x,y_0,\ldots,y_{n-1}$ is called an \emph{infinitesimal contact symmetry} if is satisf\/ies
\begin{gather*}
[Z,X] \in (X),\\
 {\rm Lie}_{Z}(dy_0-y_1dx, \ldots, dy_{n-2}-y_{n-1}dx) \subseteq (dy_0-y_1dx, \ldots, dy_{n-2}-y_{n-1}dx).
\end{gather*}
Inf\/initesimal point symmetries form a Lie subalgebra of the Lie algebra of inf\/initesimal contact symmetries.

On the other hand, an \emph{infinitesimal symmetry} of the system~\eqref{c_system} is a vector f\/ield~$Z$ such that $[Z,X] \in (X)$.
See for instance~\cite{Athorne1997} and \cite[pp.~12--16]{KLR}. It is clear that the Lie algebra of symmetries of the companion system~\eqref{c_system} contains the Lie algebra of inf\/initesimal contact symmetries of the higher-order dif\/ferential equation~\eqref{e_operator}.

In this paper, we have explored the relation between some Lie algebras of symmetries of a~f\/irst-order system~\eqref{c_system} and
its dif\/ferential Galois group. The relation between the Lie algebra of inf\/initesimal point symmetries of an operator~\eqref{e_operator} and its dif\/ferential Galois group has been investigated, with rather negative results, in~\cite{Athorne1997} and~\cite{Vanderput2002}.

\subsection*{Acknowledgements}

The authors thank their colleagues for interesting discussions that encouraged
them to write this paper, especially those attending the meeting Algebraic
Methods in Dynamical Systems~2014. David Bl\'azquez-Sanz acknowledges Universidad Nacional de Colombia for supporting his research through project  ref.~HERMES-27984. Juan J.~Morales-Ruiz research  has been also partially supported by the Spanish MINECO-FEDER Grant MTM2012-31714. We thank the anonymous referees for their remarks and suggestions.

\pdfbookmark[1]{References}{ref}
\LastPageEnding


\begin{thebibliography}{99}
\footnotesize\itemsep=0pt

\bibitem{Athorne1997}
Athorne C., Symmetries of linear ordinary dif\/ferential equations,
  \href{http://dx.doi.org/10.1088/0305-4470/30/13/015}{\textit{J.~Phys.~A: Math. Gen.}} \textbf{30} (1997), 4639--4649.

\bibitem{AZ}
Ayoul M., Zung N.T., Galoisian obstructions to non-{H}amiltonian integrability,
  \href{http://dx.doi.org/10.1016/j.crma.2010.10.024}{\textit{C.~R.~Math. Acad. Sci. Paris}} \textbf{348} (2010), 1323--1326,
  \href{http://arxiv.org/abs/0901.4586}{arXiv:0901.4586}.

\bibitem{Barkatou2007}
Barkatou M.A., Factoring systems of linear functional equations using
  eigenrings, in Computer Algebra~2006, \href{http://dx.doi.org/10.1142/9789812778857_0003}{World Sci. Publ.}, Hackensack, NJ, 2007,
  22--42.

\bibitem{BM2010}
Bl{\'a}zquez-Sanz D., Morales-Ruiz J.J., Dif\/ferential {G}alois theory of
  algebraic {L}ie--{V}essiot systems, in Dif\/ferential Algebra, Complex Analysis
  and Orthogonal Polynomials, \href{http://dx.doi.org/10.1090/conm/509/09973}{\textit{Contemp. Math.}}, Vol.~509, Amer. Math.
  Soc., Providence, RI, 2010, 1--58, \href{http://arxiv.org/abs/0901.4480}{arXiv:0901.4480}.

\bibitem{BM2012}
Bl{\'a}zquez-Sanz D., Morales-Ruiz J.J., Lie's reduction method and
  dif\/ferential {G}alois theory in the complex analytic context,
  \href{http://dx.doi.org/10.3934/dcds.2012.32.353}{\textit{Discrete Contin. Dyn. Syst.}} \textbf{32} (2012), 353--379,
  \href{http://arxiv.org/abs/0901.4479}{arXiv:0901.4479}.

\bibitem{Bog1996}
Bogoyavlenskij O.I., A concept of integrability of dynamical systems,
  \textit{C.~R.~Math. Rep. Acad. Sci. Canada} \textbf{18} (1996), 163--168.

\bibitem{Bog2005}
Bogoyavlenskij O.I., Hidden structure of symmetries, \href{http://dx.doi.org/10.1007/s00220-004-1253-x}{\textit{Comm. Math. Phys.}}
  \textbf{254} (2005), 479--488.

\bibitem{CrespoHajto2011}
Crespo T., Hajto Z., Algebraic groups and dif\/ferential {G}alois theory,
  \href{http://dx.doi.org/10.1090/gsm/122}{\textit{Graduate Studies in Mathematics}}, Vol.~122, Amer. Math. Soc.,
  Providence, RI, 2011.

\bibitem{Jensen2005}
Jensen C.V., Linear {ODE}s and {${\mathcal D}$}-modules, solving and
  decomposing equations using symmetry methods, \textit{Lobachevskii~J. Math.}
  \textbf{17} (2005), 149--212.

\bibitem{Katz}
Katz N.M., A conjecture in the arithmetic theory of dif\/ferential equations,
  \textit{Bull. Soc. Math. France} \textbf{110} (1982), 203--239.

\bibitem{KLR}
Kushner A., Lychagin V., Rubtsov V., Contact geometry and nonlinear
  dif\/ferential equations, \textit{Encyclopedia of Mathematics and its
  Applications}, Vol.~101, Cambridge University Press, Cambridge, 2007.

\bibitem{Malgrange2002}
Malgrange B., On nonlinear dif\/ferential {G}alois theory, \href{http://dx.doi.org/10.1142/S0252959902000213}{\textit{Chinese Ann.
  Math. Ser.~B}} \textbf{23} (2002), 219--226.

\bibitem{morales1999}
Morales-Ruiz J.J., Dif\/ferential {G}alois theory and non-integrability of
  {H}amiltonian systems, \href{http://dx.doi.org/10.1007/978-3-0348-8718-2}{\textit{Progress in Mathematics}}, Vol.~179,
  Birkh\"auser Verlag, Basel, 1999.

\bibitem{Morales2015}
Morales-Ruiz J.J., Picard--{V}essiot theory and integrability, \href{http://dx.doi.org/10.1016/j.geomphys.2014.07.006}{\textit{J.~Geom.
  Phys.}} \textbf{87} (2015), 314--343.

\bibitem{MR2001}
Morales-Ruiz J.J., Ramis J.-P., Galoisian obstructions to integrability of
  Hamiltonian systems, \textit{Methods Appl. Anal.} \textbf{8} (2001), 33--96.

\bibitem{Olver}
Olver P.J., Applications of {L}ie groups to dif\/ferential equations,
  \href{http://dx.doi.org/10.1007/978-1-4684-0274-2}{\textit{Graduate Texts in Mathematics}}, Vol.~107, Springer-Verlag, New York,
  1986.

\bibitem{Vanderput2002}
Oudshoorn W.R., van~der Put M., Lie symmetries and dif\/ferential {G}alois groups
  of linear equations, \href{http://dx.doi.org/10.1090/S0025-5718-01-01397-7}{\textit{Math. Comp.}} \textbf{71} (2002), 349--361.

\bibitem{Seidenberg1958}
Seidenberg A., Abstract dif\/ferential algebra and the analytic case,
  \href{http://dx.doi.org/10.1090/S0002-9939-1958-0093655-0}{\textit{Proc. Amer. Math. Soc.}} \textbf{9} (1958), 159--164.

\bibitem{Seidenberg1969}
Seidenberg A., Abstract dif\/ferential algebra and the analytic case.~{II},
  \href{http://dx.doi.org/10.1090/S0002-9939-1969-0248122-5}{\textit{Proc. Amer. Math. Soc.}} \textbf{23} (1969), 689--691.

\bibitem{Singer1996}
Singer M.F., Testing reducibility of linear dif\/ferential operators: a~group-theoretic perspective, \href{http://dx.doi.org/10.1007/BF01191378}{\textit{Appl. Algebra Engrg. Comm. Comput.}}
  \textbf{7} (1996), 77--104.

\bibitem{PutSinger2003}
van~der Put M., Singer M.F., Galois theory of linear dif\/ferential equations,
  \href{http://dx.doi.org/10.1007/978-3-642-55750-7}{\textit{Grundlehren der Mathematischen Wissenschaften}}, Vol.~328,
  Springer-Verlag, Berlin, 2003.

\bibitem{Weil1995}
Weil J.-A., First integrals and {D}arboux polynomials of homogeneous linear
  dif\/ferential systems, in Applied Algebra, Algebraic Algorithms and
  Error-Correcting Codes ({P}aris, 1995), \href{http://dx.doi.org/10.1007/3-540-60114-7_37}{\textit{Lecture Notes in Comput.
  Sci.}}, Vol.~948, Springer, Berlin, 1995, 469--484.

\end{thebibliography}
\end{document}